\newtheorem{Th}{Theorem}
\newtheorem{obs}{Remark}
\newtheorem{lema}{Lemma}
\numberwithin{equation}{section} \numberwithin{Th}{section}
\numberwithin{cor}{section} \numberwithin{lema}{section}
\numberwithin{prop}{section} \numberwithin{obs}{section}
\numberwithin{Def}{section}
\def\rr{\mathbb{R}}
\def\q{\quad}
\def\D{\Delta}
\def\dx{\ \mathrm{dx}}
\def\dr{\ \mathrm{dr}}
\def\ds{\ \mathrm{ds}}
\def\iir0{\iint\limits_{\Omega_{r_0}\times(0,T)}}
\def\sing2{|x|^{-\frac{N+2}{2}+\sqrt{\lambda_\star-\lambda}}}
\def\benu{\begin{enumerate}}
\def\enu{\end{enumerate}}
\def\cat2{\big(\frac{1}{r_0}\big)^{2\lambda}}
\def\n{\nabla}
\def\be{\begin{equation}}
\def\ee{\end{equation}}
\def\nn{\mathbb{N}}
\def\into{\int_{\Omega}}
\def\hoi{H_{0}^{1}(\Omega)}
\def\eps{\varepsilon}
\def\n{\nabla}
\def\D{\Delta}
\def\q{\quad}
\def\eps{\varepsilon}
\def\({\Big(}
\def\){\Big)}
\def\nn{\mathbb{N}}
\def\into{\int_{\Omega}}
\def\hoi{H_{0}^{1}(\Omega)}
\def\eps{\varepsilon}
\def\be{\begin{equation}}
\def\ee{\end{equation}}
\newcommand{\cf}{\emph{cf. }}
\newcommand{\eg}{\emph{e.g. }}
\def \semi{\mathcal{D}^{1,2}(\Omega)}
\def \semiH{\mathcal{D}^{1,2}(\Omega)}
\title[Optimal results for multipolar Hardy inequalities]
{New estimates for the Hardy constants of multipolar Schr\"{o}dinger operators}
\author[Cristian Cazacu]
{Cristian Cazacu $^{1,2}$}
\thanks{$^1$ Department of Mathematics and Informatics, Faculty of Applied Sciences,
 University Politehnica of Bucharest,
Splaiul Independentei 313, Bucharest, 060042, Romania. }
\thanks{ $^2$ ``Simion Stoilow" Institute of Mathematics of the Romanian Academy, Research Group of the Projects PN-II-ID-PCE-2012-4-0021 and PN-II-ID-PCE-2011-3-0075, 21 Calea Grivitei Street, 010702 Bucharest, Romania. \\
 E-mail: cristi\_cazacu2002@yahoo.com.
}
\begin{document}
\maketitle
\begin{center}
\end{center}
\begin{abstract}
In this paper we study the optimization problem
$$\mu^\star(\Omega):=\inf_{u\in \semi}\frac{\into |\n u|^2 \dx}{\into V u^2 \dx}$$
in a suitable functional space $\semi$. Here, $V$ is the multi-singular potential given by $$V:=\sum_{1\leq i<j\leq n} \frac{|a_i-a_j|^2}{|x-a_i|^2|x-a_j|^2}$$ and all the singular poles $a_1, \ldots, a_n$, $n\geq 2$,  arise either in the interior or at the boundary of a  smooth open domain $\Omega\subset \rr^N$, with  $N\geq 3$ or $N \geq 2$, respectively.

For a bounded domain $\Omega$  containing all the singularities in the interior, we prove that $\mu^\star(\Omega)>\mu^\star(\rr^N)$ when $n\geq 3$ and $\mu^\star(\Omega)=\mu^\star(\rr^N)$ when $n=2$ (It is known from \cite{cristi1} that $\mu^\star(\rr^N)=(N-2)^2/n^2)$.

 In the situation when all the poles are located on the boundary we show that $\mu^\star(\Omega)=N^2/n^2$ if $\Omega$ is either a ball, the exterior of a ball or a half-space. Our results do not depend on the distances between the poles. In addition, in the case of boundary singularities we obtain  that $\mu^\star(\Omega)$ is attained in $\hoi$ when $\Omega$ is a ball and $n\geq 3$. Besides, $\mu^\star(\Omega)$ is attained in $\semi$ when $\Omega$ is the exterior of a ball with $N\geq 3$ and $n\geq 3$ whereas in the case of a half-space $\mu^\star(\Omega)$ is attained in $\semi$ when $n\geq 3$.

    We also analyze the critical constants in the so-called \textit{weak} Hardy inequality which characterizes the range of $\mu's$  ensuring  the existence of a lower bound for the spectrum of the Schr\"{o}dinger operator $-\Delta -\mu V$. In the context of both interior and boundary singularities we show that the critical constants in the weak Hardy inequality are $(N-2)^2/(4n-4)$  and $N^2/(4n-4)$, respectively.
\end{abstract}

{\bf Keywords:} Hardy inequality, multi-singular potentials, optimal constants, existence of minimizers.

{\bf  MSC (2010):} 46E35, 26D10, 35J75, 35B25.
\newpage
\section{Introduction}

In this paper we are dealing  with the multi-singular inverse-square potential
$$
V=\sum_{1\leq i<j\leq n} \frac{|a_i-a_j|^2}{|x-a_i|^2|x-a_j|^2}, \ n\geq 2,
$$
 with the  singular poles $a_1, \ldots, a_n$ ($a_i \neq a_j$ for any $i\neq j$). We consider two extremal situations concerning the location of the singularities: all the poles $a_1, \ldots, a_n$ are located either in the interior or on the boundary  of an open smooth domain $\Omega\subset\rr^N$, $N\geq 2$. In the case of interior singularities we assume the dimension restriction $N\geq 3$ (below we give a motivation to this fact).

For such $V$ we consider  multipolar Schr\"{o}dinger operators of the form
     \be\label{opSch}
     L_\mu:=-\D -\mu V, \quad \mu> 0,
     \ee
in which a positive potential is subtracted making the operator $L_\mu$ less positive than the Laplace operator $-\Delta$.

  We are interested to investigate the range of parameters $\mu's$ for which $L_\mu$ is still nonnegative, i.e. the operator inequality $L_\mu\geq 0$ is verified in $L^2$-quadratic forms. This issue is equivalent to the study of the Hardy inequality
\be\label{Hardy_inequality}
\into |\n u|^2 \dx \geq \mu \into V u^2 \dx, \quad \forall u \in C_0^\infty(\Omega).
\ee
The classical Hardy inequality involves a one singular inverse-square potential and it asserts that (see \eg \cite{hardy-polya}) for an interior pole $a_i\in \Omega$  it holds
 \be\label{classical_Hardy_inequality}
 \into |\n u|^2 \dx \geq \lambda \into \frac{u^2}{|x-a_i|^2} \dx, \quad \forall u\in C_0^\infty(\Omega),
 \ee
 if and only if $\lambda\leq \lambda^\star:=(N-2)^2/4$.

  Formally, inequality \eqref{classical_Hardy_inequality} is trivially true for $\lambda=\lambda^\star$ when $N=2$. However, in this case it occurs that \eqref{classical_Hardy_inequality} is not well-posed because the potential term on the right hand side may not be integrable. Indeed, for an interior pole $a_i\in \Omega$ and a given function $u\in C_0^\infty(\Omega)$ such that $u(a_i)=1$ then $u^2/|x-a_i|^2$ is not locally integrable unless $N>2$. Similar integrability properties are valid for the potential term $Vu^2$  in \eqref{Hardy_inequality} since $\lim_{x\to a_i} V|x-a_i|^2$ is finite, so both potentials $V$ and $1/|x-a_i|^2$ are comparable near each singularity $a_i$.

  For these reasons, when dealing with interior singularities it is correct to study \eqref{Hardy_inequality} in higher dimensions $N\geq 3$. Otherwise, if the poles arise at the boundary, the potential term in \eqref{Hardy_inequality} makes sense for any $N\geq 2$. This latter fact is motivated by an improved version  of \eqref{classical_Hardy_inequality} which is true when the singularity $a_i$ is located on the boundary of $\Omega$.   In this case, up to $L^2$-reminder terms, \eqref{classical_Hardy_inequality} holds in the new range of parameters $\lambda\leq N^2/4$ (see \eg \cite{MR2847469}).

The main goal of  this paper is to study the optimal constant of the Hardy inequality \eqref{Hardy_inequality} which is defined through the optimization problem
\be\label{optimalhardy}
\mu^\star(\Omega; a_1, \ldots, a_n):=\inf_{u\in \semi}\frac{\into |\n u|^2 \dx}{\into V u^2 \dx}.
\ee
When there is no risk of confusion, we will write $\mu^\star(\Omega)$  instead of $\mu^\star(\Omega; a_1, \ldots, a_n)$.

 Here, the functional space $\semi$   is defined naturally  as the completion of $C_{0}^{\infty}(\Omega)$ with respect to the norm
\begin{equation}
\|u\|=\into |\n u|^2 \dx.
\end{equation}
Clearly, $\semi=\hoi$ if $\Omega$ is a domain for which the Poincar\'{e} inequality is verified (such as bounded domains)  otherwise,  $\semi \supsetneq \hoi$. Working in $\semi$ is determinant when studying the attainability of $\mu^\star(\Omega)$ since the proper space where we have to look for minimizers in \eqref{optimalhardy} is precisely $\semi$.

Besides, in the present article we also analyze the range of parameters $\mu's$ that ensures the existence of a lower bound for the spectrum of the operator $L_\mu$  with Dirichlet boundary conditions (denoted by $L_\mu^{D}$). Namely, in the spirit of Davies \cite{MR1366614} and Brezis-Marcus \cite{MR1655516},  we are seeking for those $\mu's$ for which there exists a finite constant $c_\mu\in \rr$ (which might also be negative) such that $L_\mu^{D}\geq  c_\mu$.  This is equivalent to the following Hardy-type inequality:
       \be\label{hpineq}
\into |\n u|^2 \dx -\mu  \into V u^2 \dx\geq c_\mu\into u^2 \dx, \quad \forall u\in C_{0}^{\infty}(\Omega).
       \ee
       For any fixed $\mu>0$ let $c_\mu^\star$ to be the supremum of all constants $c_\mu\in \rr$ satisfying \eqref{hpineq}. If   $c_\mu^\star<0$ then \eqref{hpineq} is the so-called  \textit{weak} Hardy inequality.
In view of these, let us also define the  \textit{weak} Hardy constant for inequality \eqref{hpineq}, that is
     \be\label{weakoptimalHardy}
     \mu_\omega^\star(\Omega):=\sup\{ \mu >0 \ |\ \exists \ c_\mu\in \rr \textrm{ satisfying } \eqref{hpineq}\}.
     \ee
        Since the definition of  $\mu^\star(\Omega)$  in \eqref{optimalhardy} is more restrictive, it is not difficult to observe that $\mu^\star(\Omega)$ is smaller than or equal to the weak Hardy constant $\mu_\omega^\star(\Omega)$  in \eqref{weakoptimalHardy}.

  Our interest in optimal Hardy constants is particularly motivated by the previous related results in  Bosi-Dolbeault-Esteban \cite{MR2379440} and  Zuazua and myself in  \cite{cristi1}. First it was shown in \cite{MR2379440} that, in the whole space $\rr^N$, $N\geq 3$, it is true that $$L_\mu^D> 0, \quad  \forall \mu\leq \frac{(N-2)^2}{4n^2},$$
 and the location of the singular poles does not matter.
 More precisely, the authors in    \cite{MR2379440} proved that
 \begin{align}\label{pushu}
 \int_{\rr^N} |\n u|^2 \dx \geq \frac{(N-2)^2}{4 n^2}&  \int_{\rr^N} V u^2 \dx\nonumber\\
 &+\frac{(N-2)^2}{4n} \sum_{i=1}^{n}\int_{\rr^N} \frac{u^2}{|x-a_i|^2} \dx, \quad \forall u\in C_0^\infty(\rr^N),
 \end{align}
 for any fixed configuration $a_1, \ldots, a_n\in \rr^N$, with $a_i \neq a_j$ for $i\neq j$.
 It occurs that the constant $(N-2)^2/(4n^2)$ in \eqref{pushu} is not optimal in the sense of \eqref{optimalhardy}. More precisely, we proved in \cite{cristi1} the improved Hardy inequality
\be\label{wholespace}
 \int_{\rr^N} |\n u|^2 \dx \geq \frac{(N-2)^2}{n^2}  \int_{\rr^N} V u^2 \dx, \quad \forall u\in C_{0}^{\infty}(\rr^N),
\ee
   with the optimal constant
   \begin{equation}\label{optimal.global}
   \mu^\star(\rr^N)=\frac{(N-2)^2}{n^2}.
    \end{equation}
 Moreover, $\mu^\star(\rr^N)$ is not attained in $\semiH$.

The Hardy-type inequalities have been intensively studied in the literature. In particular, they play a crucial influence to the spectral properties of singular Hamiltonian operators which arise in mathematical physics or quantum mechanics (\cf Davies \cite{MR1747888},   Laptev et al. \cite{MR2083152}, \cite{laptev}, Krej{\v{c}}i{\v{r}}{\'{\i}}k-Zuazua \cite{MR2679028},  Brezis-Vazquez \cite{MR1605678}, Pinchover et al.   \cite{tintarevpinchover}, \cite{devyver},   Fefferman \cite{fefe}, etc.). In connection with single poles we also mention for instance papers such as \cite{caldiroli2, cras1, DD, MR2966112} and references therein.   For Hamiltonians with multipolar potentials we refer to the papers by Bosi et al. \cite{MR2379440},  Terracini et al.  \cite{MR2241305}, \cite{MR2735078}, \cite{MR2514383}  and the references therein. In particular, important remarks, comparisons and  applications of such Hamiltonians were emphasized in more details in the Introduction of  \cite{cristi1}.

\subsection*{Useful notations}

Before entering into details let us fix some useful notations and notions.

We designate by $\Gamma$ the boundary of $\Omega$. As usual, we denote by $B_r(x_0)$ the ball of radius $r>0$ centered at a point $x_0 \in \rr^N$, and by $B^c_r(x_0)$ the exterior of $B_r(x_0)$. The upper half-space in $\rr^N$ is defined by the set $$\rr_{+}^{N}:=\{x=(x_1, \ldots, x_N)\in \rr^N \ | \ x_N>0  \}$$ which is supported by the hyperplane $x_N=0$.   In a more general sense, a half-space in $\rr^N$ can be defined as a support of an affine hyperplane of the form $\alpha_1 x_1+\ldots+ \alpha_N x_N=b$, with fixed constants $\alpha_i, b\in \rr$, $i\in \{1, \ldots, N\}$.  For simplicity, in this paper we will only refer to  $\rr_+^N$. However, our next results obtained for $\rr_+^N$ could also be extended to any general half-space.
Besides, we will make use of the notations $$d:=\min_{i, j\in \{1, \ldots, n\}} |a_i-a_j|, \quad  d_\Gamma:= \min_{i\in \{1, \ldots, n\}} \rho(a_i), \quad M:=\min\{d, d_\Gamma\},$$ where $\rho(x)=\inf_{y\in \Gamma}|x-y|$ denotes the distance function to the boundary.

Throughout the paper we will also consider the parameter $\eps>0$ to be small enough since it will be aimed to tend to zero.\\

\noindent{\bf Main results.} In this paper we extend and improve the previous results shown in  \cite{cristi1}.

 In the case of interior singularities we obtain the following unexpected results for $\mu^\star(\Omega)$.
\begin{Th}\label{prop1}
  Assume that $\Omega\subset \rr^N$, $N\geq 3$, is an open bounded domain such that  $a_1, \ldots, a_n\in \Omega$, $n\geq 2$. We assert that
  \be\label{equality.constant}
   \left\{ \begin{array}{ll}
                             \mu^\star(\Omega) =  \frac{(N-2)^2}{n^2}, & if\  n=2, \\ [10pt]
     \frac{(N-2)^2}{n^2} < \mu^\star(\Omega)\leq \frac{(N-2)^2}{4n-4}, &  if\  n\geq 3.\\
                            \end{array}\right.
  \ee
  Moreover, if $n=2$ then \eqref{equality.constant} is verified in any open domain $\Omega$ (not necessary bounded).
  \end{Th}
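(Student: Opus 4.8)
The estimate $\mu^\star(\Omega)\ge\mu^\star(\rr^N)=(N-2)^2/n^2$ holds for \emph{every} $n\ge 2$ and \emph{every} open $\Omega\subset\rr^N$: extending any $u\in C_0^\infty(\Omega)$ by $0$ yields an element of $C_0^\infty(\rr^N)$ with the same values of $\into|\n u|^2\dx$ and $\into Vu^2\dx$, so the infimum \eqref{optimalhardy} over $\Omega$ dominates the one over $\rr^N$, which is $(N-2)^2/n^2$ by \eqref{optimal.global}. Hence only the upper bounds and the strict inequality require work.

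For the upper bound I would concentrate test functions near a single pole. Fix $a_i\in\Omega$ and $r>0$ small enough that $B_r(a_i)\subset\Omega$ contains no other pole; there $V(x)|x-a_i|^2=\sum_{j\ne i}|a_i-a_j|^2/|x-a_j|^2\to n-1$ uniformly as $r\to0$, so $V$ and $(n-1)|x-a_i|^{-2}$ are comparable on $B_r(a_i)$ with ratios tending to $1$. Taking a minimizing sequence $w_k\in C_0^\infty(B_1(0))$ for the sharp one-pole Hardy inequality (best constant $(N-2)^2/4$, see \eqref{classical_Hardy_inequality}) and rescaling to $w_{k,r}(x):=w_k((x-a_i)/r)\in C_0^\infty(\Omega)$, the scale invariance of the one-pole quotient together with the above comparison gives, after a diagonal passage $r\to0$ then $k\to\infty$,
\[
\mu^\star(\Omega)\ \le\ \frac{1}{n-1}\cdot\frac{(N-2)^2}{4}\ =\ \frac{(N-2)^2}{4n-4}.
\]
Since this uses only that $\Omega$ is open with an interior pole, combined with the first paragraph it already yields $\mu^\star(\Omega)=(N-2)^2/4=(N-2)^2/n^2$ when $n=2$ for \emph{any} open $\Omega$ --- which proves the case $n=2$ and the last sentence of the theorem --- and it gives the upper bound in \eqref{equality.constant} for $n\ge3$.

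It remains to prove $\mu^\star(\Omega)>c:=(N-2)^2/n^2$ when $n\ge3$ and $\Omega$ is bounded; assume for contradiction $\mu^\star(\Omega)=c$. First I would show the infimum is attained. Take $u_k\in C_0^\infty(\Omega)$ with $\into|\n u_k|^2\dx=1$ and $\into Vu_k^2\dx\to1/c$. Boundedness of $\Omega$ and the Poincar\'e inequality bound $(u_k)$ in $\hoi$, so along a subsequence $u_k\rightharpoonup u$ in $\hoi$ and $u_k\to u$ in $L^2(\Omega)$, while the only obstruction to $\into Vu_k^2\dx\to\into Vu^2\dx$ is concentration of $|u_k|^2$ at the poles, the sole points where $V$ is unbounded. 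A one-pole Hardy estimate with a cut-off shows that a concentrated Dirichlet energy $\mu_i$ at $a_i$ and the accompanying $V$-weighted concentrated mass, which equals $(n-1)\nu_i$ because $V\sim(n-1)|x-a_i|^{-2}$ near $a_i$, satisfy $\mu_i\ge\frac{(N-2)^2}{4(n-1)}(n-1)\nu_i$. Inserting this and $\into|\n u|^2\dx\ge c\into Vu^2\dx$ into the balances $1\ge\into|\n u|^2\dx+\sum_i\mu_i$ and $1/c=\into Vu^2\dx+\sum_i(n-1)\nu_i$ gives
\[
1\ \ge\ 1+\Big(\frac{(N-2)^2}{4n-4}-c\Big)\sum_i(n-1)\nu_i ,
\]
and since $\frac{(N-2)^2}{4n-4}-c=\frac{(N-2)^2(n-2)^2}{4n^2(n-1)}>0$ for $n\ge3$, every $\nu_i=0$; hence $\into Vu^2\dx=1/c$ (so $u\ne0$), no concentration occurs, $u_k\to u$ strongly in $\hoi$, and $u$ realizes $\mu^\star(\Omega)=c$.

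To reach a contradiction I would invoke the ground state $\Phi(x):=\prod_{i=1}^n|x-a_i|^{-(N-2)/n}$, which satisfies $-\D\Phi=cV\Phi$ off the poles (expand $\D\Phi/\Phi=\D\log\Phi+|\n\log\Phi|^2$ and use $2(x-a_i)\!\cdot\!(x-a_j)=|x-a_i|^2+|x-a_j|^2-|a_i-a_j|^2$). The substitution $u=\Phi v$ turns, for $u\in C_0^\infty(\Omega)$, the expression $\into|\n u|^2\dx-c\into Vu^2\dx$ into a perfect square,
\[
\into|\n u|^2\dx-c\into Vu^2\dx\ =\ \into\Phi^2|\n v|^2\dx\ \ge\ 0 ,
\]
and this identity persists for the minimizer $u$: the left-hand side is continuous on $\hoi=\semi$ (by Hardy and Poincar\'e), and $\Phi\n(u_k/\Phi)=\n u_k-u_k\n\log\Phi\to\n u-u\n\log\Phi$ in $L^2(\Omega)$ because $u_k\to u$ strongly in $\hoi$ (one-pole Hardy controls the terms near each $a_i$). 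As $u$ is a minimizer the right-hand side vanishes; since $\Phi\n v\in L^2(\Omega)$ and $|\n v|\le C|x-a_i|^{(N-2)/n}|\Phi\n v|$ near $a_i$ we get $\n v\in L^2_{\mathrm{loc}}(\Omega)$, so $v$ is constant on the connected set $\Omega$; the poles being interior, $\Phi$ is smooth and bounded away from $0$ near $\partial\Omega$, so $v$ has zero trace there, forcing $v\equiv0$ and $u\equiv0$ --- a contradiction. Hence $\mu^\star(\Omega)>c$, which completes \eqref{equality.constant}. I expect the main obstacle to be the concentration-compactness bookkeeping of the third step --- making precise that concentration at a pole costs the strictly larger constant $(N-2)^2/(4n-4)$, which is exactly why the gap with $(N-2)^2/n^2$ opens for $n\ge3$ --- together with the passage of the ground-state identity, up to the poles, to the limiting minimizer; both rely on the sharpness of the classical Hardy constant, hence on $N\ge3$.
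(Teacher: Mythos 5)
Your proof is correct in substance, but it takes a genuinely different route from the paper at the one point that matters. The first two steps coincide with the paper's: the bound $\mu^\star(\Omega)\geq (N-2)^2/n^2$ is the whole-space inequality \eqref{wholespace} restricted to $\Omega$ (the paper re-derives it through the ground-state identity \eqref{optint}), and your upper bound, obtained by concentrating rescaled near-optimizers of the one-pole Hardy inequality at a single pole and using $V(x)|x-a_i|^2\to n-1$, is exactly the mechanism of Lemma \ref{lema1.int}\eqref{2.int} (the paper tests near all poles simultaneously and, for $n=2$, writes the explicit logarithmic cut-off sequence \eqref{seq.int}; either choice gives $(N-2)^2/(4n-4)$, and being compactly supported near the poles it also covers unbounded $\Omega$ when $n=2$). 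The divergence is the strict inequality for $n\geq 3$: the paper proves it directly and quantitatively, combining \eqref{optint} with the weighted one-pole Hardy inequality of Lemma \ref{lema2} and the comparison constants $\kappa_1,\kappa_2$ (finite precisely because $\Omega$ is bounded), which yields the explicit gap $\kappa_1\kappa_2(N-2)^2(1-2/n)^2/4$; you instead argue by contradiction, showing that if $\mu^\star(\Omega)=(N-2)^2/n^2$, which lies strictly below $(N-2)^2/(4n-4)$, then a minimizing sequence cannot concentrate at the poles, the infimum is attained, and the substitution $u=\Phi v$ with $\Phi=\prod_i|x-a_i|^{-(N-2)/n}$ forces the minimizer to be a multiple of $\Phi$, incompatible with $\hoi$. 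This works, and it is essentially the paper's own machinery recombined: your concentration bookkeeping ($\mu_i\geq \frac{(N-2)^2}{4(n-1)}(n-1)\nu_i$) is what Lemma \ref{lema1.int}\eqref{1.int} plus the cut-off argument formalize, your attainment step is the Brezis--Lieb argument of Theorem \ref{moregen}\eqref{item3}, and your rigidity step is the argument of Theorem \ref{moregen}\eqref{item4} -- with the caveat that for $n\geq 3$ one has $\nabla\Phi\in L^2_{\mathrm{loc}}$ near the poles, so the contradiction must come solely from the boundary behavior, for which you should either invoke the standing smoothness of $\Gamma$ (trace argument) or use a truncation argument valid for a general bounded domain. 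What each route buys: yours is softer and reuses the local analysis already needed for the upper bound, but it gives no information on the size of the gap and requires boundedness through compactness and an attainment discussion; the paper's direct route gives a computable strict improvement and bypasses any compactness considerations.
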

Theorem \ref{prop1} shows in the case of interior singularities that there is a gap between $\mu^\star(\Omega)$ and $\mu^\star(\rr^N)$ when  $\Omega\subset \rr^N$ is bounded, if and only if $n\geq 3$.

The following theorem provides optimal results concerning the weak Hardy constant $\mu_\omega^\star(\Omega)$ in the case of  interior singularities.
 \begin{Th}\label{moregen}
Assume $\Omega\subset \rr^N$, $N\geq 3$ is an open bounded domain  such that $a_1, \ldots, a_n \in \Omega$, $n\geq 2$. Then we successively obtain that
\begin{enumerate}[(i).]
\item \label{item1}  For any parameter $\mu< (N-2)^2/(4n-4)$,
 there exists a finite constant  $c_\mu\in \rr$ which  depends on $\Omega$ and  $a_1, \ldots, a_n$,  so that the inequality
\be\label{good1}
c_\mu\into u^2 \dx+ \into |\n u|^2 \dx \geq \mu  \into V u^2 \dx,
\ee
is verified for any $u \in C_{0}^{\infty}(\Omega)$.
 \item\label{item2} For any $\mu> (N-2)^2/(4n-4)$  and any constant $\lambda\in \rr$ there exists $u_{\lambda, \mu}\in C_{0}^{\infty}(\Omega)$ such that
\be\label{good2}
\lambda \into u_{\lambda, \mu}^2 \dx+ \into |\n u_{\lambda, \mu}|^2 \dx <  \mu  \into V u_{\lambda, \mu}^2 \dx.
\ee
\item\label{item3}  If $\mu^\star(\Omega)< (N-2)^2/(4n-4)$  then $\mu^\star(\Omega)$ is attained in $\hoi$.

\item\label{item4} If $n=2$  then $\mu^\star(\Omega)$  is never attained in $\semi$.
\end{enumerate}
\end{Th}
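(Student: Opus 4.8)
The plan is to prove the four items separately; the common mechanism is the behaviour of $V$ near a pole. Fix $r<M$ small enough that the balls $B_r(a_i)\subset\Omega$ are pairwise disjoint and each of them contains no pole other than $a_i$; on $B_r(a_i)$ one may write $V(x)=g_i(x)/|x-a_i|^2+h_i(x)$ with $g_i,h_i\ge 0$ continuous, $g_i(a_i)=n-1$ and $h_i$ bounded, so that for every $\delta>0$, after shrinking $r$,
\be\label{locbound}
\frac{n-1-\delta}{|x-a_i|^2}\le V(x)\le\frac{n-1+\delta}{|x-a_i|^2},\qquad x\in B_r(a_i);
\ee
this is what ties the threshold $(N-2)^2/(4n-4)$ to the one-pole Hardy constant $\cn$ (the former being $\cn$ divided by the local weight $n-1$). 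To obtain (i) I would localise: choose a partition of unity $\ff_0^2+\sum_{i=1}^n\ff_i^2\equiv 1$ with $\ff_i\in C_0^\infty(B_r(a_i))$ for $i\ge 1$ and $\ff_0$ supported away from every pole, and use the IMS identity $\into|\n u|^2\dx=\sum_{i=0}^n\into|\n(\ff_i u)|^2\dx-\sum_{i=0}^n\into|\n\ff_i|^2u^2\dx$. Since $V$ is bounded on $\mathrm{supp}\,\ff_0$, that term costs only $C\into u^2\dx$; on each $B_r(a_i)$, the right-hand inequality in \eqref{locbound} and the sharp Hardy inequality $\int_{\rr^N}|\n v|^2\dx\ge\cn\int_{\rr^N}\frac{v^2}{|x-a_i|^2}\dx$ applied to $v=\ff_i u$ give $\mu\into V\ff_i^2u^2\dx\le\frac{\mu(n-1+\delta)}{\cn}\into|\n(\ff_iu)|^2\dx$. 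Since $\mu<(N-2)^2/(4n-4)$ means $\mu(n-1)<\cn$, a small $\delta$ keeps $\theta:=\mu(n-1+\delta)/\cn<1$; summing and bounding $\into|\n\ff_i|^2u^2\dx\le C'\into u^2\dx$ yields $\mu\into Vu^2\dx\le\theta\into|\n u|^2\dx+c_\mu\into u^2\dx\le\into|\n u|^2\dx+c_\mu\into u^2\dx$, which is \eqref{good1}.

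For (ii) I would test \eqref{good2} with truncated Hardy profiles centred at $a_1$: for $0<\eps<R<M$ let $u_\eps$ equal $\eps^{-(N-2)/2}$ on $B_\eps(a_1)$, equal $|x-a_1|^{-(N-2)/2}$ on $B_R(a_1)\setminus B_\eps(a_1)$, and cut off smoothly to zero outside $B_R(a_1)$; a mollification then gives $u_\eps\in C_0^\infty(\Omega)$. A direct computation in polar coordinates gives $\into|\n u_\eps|^2\dx=\omega_{N-1}\cn\log(R/\eps)+O(1)$, while \eqref{locbound} (with $R$ chosen small) gives $\into Vu_\eps^2\dx\ge\omega_{N-1}(n-1-\delta)\log(R/\eps)+O(1)$ and $\into u_\eps^2\dx=O(1)$ as $\eps\to0$. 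Hence $\mu\into Vu_\eps^2\dx-\into|\n u_\eps|^2\dx\ge\omega_{N-1}\bigl(\mu(n-1-\delta)-\cn\bigr)\log(R/\eps)+O(1)$; since $\mu>(N-2)^2/(4n-4)$, i.e.\ $\mu(n-1)>\cn$, we may fix $\delta$ (hence $R$) so that the coefficient is strictly positive, and then the right-hand side tends to $+\infty$ as $\eps\to0$ whereas $\lambda\into u_\eps^2\dx$ stays bounded. Thus $u_{\lambda,\mu}:=u_\eps$ satisfies \eqref{good2} for $\eps$ small.

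For (iii), recall $\semi=\hoi$ since $\Omega$ is bounded, and run the direct method. Take a minimizing sequence $u_k\in\hoi$ with $\into Vu_k^2\dx=1$ and $\into|\n u_k|^2\dx\to\mu^\star(\Omega)$, so that up to a subsequence $u_k\rightharpoonup u$ in $\hoi$, $u_k\to u$ in $\loi$ and a.e. The Brezis--Lieb lemma applied to the finite measure $V\dx$, together with the orthogonality identity for weak convergence in $\hoi$, gives $1=\into Vu^2\dx+\lim_k\into V(u_k-u)^2\dx$ and $\mu^\star(\Omega)=\into|\n u|^2\dx+\lim_k\into|\n(u_k-u)|^2\dx$; combined with $\into|\n w|^2\dx\ge\mu^\star(\Omega)\into Vw^2\dx$ for $w=u$ and $w=u_k-u$, every one of these is forced to be an equality, so it remains only to rule out $u\equiv0$. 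If $u\equiv 0$, then $w_k:=u_k\rightharpoonup0$ in $\hoi$ and $\to 0$ in $\loi$; since $V$ is bounded on $\Omega\setminus\bigcup_iB_r(a_i)$, no mass is lost there, and along a further subsequence the limits $m_i^{(r)}:=\lim_k\into V\mathbf{1}_{B_r(a_i)}w_k^2\dx$ exist with $\sum_im_i^{(r)}=1$ for every small $r$. Localising with cut-offs equal to $1$ near each $a_i$, using the upper bound in \eqref{locbound}, the sharp Hardy inequality and $\sum_i\into\mathbf{1}_{B_r(a_i)}|\n w_k|^2\dx\le\into|\n w_k|^2\dx\to\mu^\star(\Omega)$, one obtains, after $k\to\infty$ and then $r\to0$, the bound $\mu^\star(\Omega)\ge(N-2)^2/(4n-4)$, contradicting the hypothesis. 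Hence $u\not\equiv0$, and $u/(\into Vu^2\dx)^{1/2}$ attains $\mu^\star(\Omega)$ in $\hoi$. I expect this concentration step to be the main obstacle: the delicate point is the bookkeeping of the order of the limits $k\to\infty$ and $r\to0$, and of how the localised Hardy inequalities combine with the splitting of the mass $1$ among the poles.

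Finally, (iv) follows by reduction to the whole space: by Theorem \ref{prop1}, $\mu^\star(\Omega)=(N-2)^2/4=\mu^\star(\rr^N)$ when $n=2$. If some $u\in\semi=\hoi$ with $u\not\equiv0$ attained it, the zero extension $\tilde u\in\mathcal{D}^{1,2}(\rr^N)$ would be nonzero and satisfy $\int_{\rr^N}|\n\tilde u|^2\dx=\into|\n u|^2\dx=\tfrac{(N-2)^2}{4}\into Vu^2\dx=\tfrac{(N-2)^2}{4}\int_{\rr^N}V\tilde u^2\dx$, so $\tilde u$ would attain $\mu^\star(\rr^N)=(N-2)^2/4$ in $\mathcal{D}^{1,2}(\rr^N)$, contradicting the non-attainment of $\mu^\star(\rr^N)$ recalled just after \eqref{optimal.global}. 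Therefore $\mu^\star(\Omega)$ is never attained when $n=2$.
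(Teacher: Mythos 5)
Your proof is correct, and it rests on the same basic mechanism as the paper's --- the local behaviour $V|x-a_i|^2\to n-1$ near each pole (the paper's \eqref{asympt.loc.int}--\eqref{limit.int}) combined with the one-pole Hardy inequality, which is what produces the threshold $(N-2)^2/(4(n-1))$ --- but several of your sub-arguments take a genuinely different route. For item (i) the paper first proves a local statement (Lemma \ref{lema1.int}) for functions supported in small balls around the poles and then globalizes with a single cut-off $\xi_\eps\equiv 1$ near the poles, while you run an IMS partition-of-unity argument; these are essentially interchangeable. For item (ii) the paper invokes the optimality of $(N-2)^2/4$ in \eqref{classical_Hardy_inequality} abstractly, producing a test function spread over all $n$ poles, and absorbs the $\lambda$-term by shrinking the neighborhood so that $\lambda/(\mu-\mu')\le \inf V$; you instead use an explicit truncated Hardy profile concentrated at a single pole, whose Rayleigh-quotient gap diverges like $\log(R/\eps)$ and therefore swallows $\lambda\into u_{\eps}^2\dx$ automatically --- more explicit, and it makes transparent that concentration at one pole already saturates the weak constant. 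For item (iii) both proofs start from the same weak-convergence/Brezis--Lieb splitting; the paper then applies its own item (i), with $\mu$ chosen strictly between $\mu^\star(\Omega)+\delta$ and the threshold, to the remainders $e_n$ (see \eqref{con7}--\eqref{suc}), which forces $\into V e_n^2\dx\to 0$ directly, whereas you rule out vanishing of the weak limit by a localized Hardy estimate near the poles; your bookkeeping (first $k\to\infty$ at fixed $r$, with cross terms vanishing because $u_k\to 0$ in $L^2(\Omega)$, then $r\to 0$) is the right order, so this concentration step is sound, though it in effect re-derives the coercivity that item (i) already supplies. For item (iv) the paper argues directly from identity \eqref{optint}: a minimizer would have to equal $C\prod_{i=1}^{2}|x-a_i|^{-(N-2)/2}$, which does not belong to $\semi$; you instead extend a putative minimizer by zero to $\rr^N$ and contradict the non-attainment of $\mu^\star(\rr^N)$ recalled after \eqref{optimal.global} --- a valid and shorter reduction, but one that leans on the external result of \cite{cristi1}, whereas the paper's rigidity argument is self-contained.
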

\begin{obs}
In particular, Theorem \ref{moregen} emphasizes in the case of interior singularities that the upper bound  $(N-2)^2/(4n-4)$ in \eqref{equality.constant} coincides with the weak Hardy constant, i.e.
$$\mu_\omega^\star(\Omega)=\frac{(N-2)^2}{4n-4}.$$
\end{obs}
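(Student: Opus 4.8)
The displayed identity $\mu_\omega^\star(\Omega)=(N-2)^2/(4n-4)$ is exactly the combination of parts (i) and (ii) of Theorem~\ref{moregen} via the definition \eqref{weakoptimalHardy}: part (i) shows that every $\mu<(N-2)^2/(4n-4)$ admits a finite constant in \eqref{hpineq}, so $\mu_\omega^\star(\Omega)\geq(N-2)^2/(4n-4)$, while part (ii) (applied with $\lambda=c_\mu$) shows no $\mu>(N-2)^2/(4n-4)$ does, so $\mu_\omega^\star(\Omega)\leq(N-2)^2/(4n-4)$. Thus the real work is Theorem~\ref{moregen}, and its proof rests on one observation: near a pole $a_k$ one has $V=\frac{n-1}{|x-a_k|^2}\big(1+o(1)\big)+O(1)$, because $V=\sum_{j\neq k}\frac{|a_k-a_j|^2}{|x-a_k|^2|x-a_j|^2}+\sum_{i<j,\,i,j\neq k}\frac{|a_i-a_j|^2}{|x-a_i|^2|x-a_j|^2}$, where on a small ball $B_r(a_k)\subset\subset\Omega$ (with $r<M/2$) the second sum stays bounded while each ratio $|a_k-a_j|^2/|x-a_j|^2$ tends to $1$ as $x\to a_k$. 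Coupled with the sharp constant $(N-2)^2/4$ of the classical Hardy inequality \eqref{classical_Hardy_inequality} centred at $a_k$, this is precisely what forces the threshold $(N-2)^2/(4(n-1))=(N-2)^2/(4n-4)$.

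For part (i) I would use an IMS-type localization. Fix a partition of unity $\phi_0^2+\sum_{k=1}^n\phi_k^2\equiv1$ with $\mathrm{supp}\,\phi_k\subset B_r(a_k)$ for $k\geq1$ and $\mathrm{supp}\,\phi_0$ away from all poles, and use the identity $\into|\n u|^2\dx=\sum_{k=0}^n\into|\n(\phi_k u)|^2\dx-\sum_{k=0}^n\into|\n\phi_k|^2u^2\dx$. For $k\geq1$, on $B_r(a_k)$ we have $V\leq\frac{(n-1)(1+\delta(r))}{|x-a_k|^2}+C_k$ with $\delta(r)\to0$; applying \eqref{classical_Hardy_inequality} to $\phi_k u$ gives $\into|\n(\phi_k u)|^2\dx\geq\frac{(N-2)^2}{4(n-1)(1+\delta(r))}\into V(\phi_k u)^2\dx-C_k'\into(\phi_k u)^2\dx$; for $k=0$ the potential is bounded on $\mathrm{supp}\,\phi_0$ and this inequality is trivial. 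Summing over $k$ and using $\sum_k\phi_k^2=1$ yields $\into|\n u|^2\dx\geq\frac{(N-2)^2}{4(n-1)(1+\delta(r))}\into Vu^2\dx-C(r)\into u^2\dx$; given $\mu<(N-2)^2/(4n-4)$, choosing $r$ small enough that $\mu\leq\frac{(N-2)^2}{4(n-1)(1+\delta(r))}$ gives \eqref{good1} with $c_\mu=C(r)$.

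For part (ii) I would concentrate test functions at one pole, say $a_1$, using there the reverse bound $V\geq\frac{(n-1)(1-\delta'(r))}{|x-a_1|^2}$ on $B_r(a_1)$ with $\delta'(r)\to0$. Taking $u_\eps$ to be $|x-a_1|^{-(N-2)/2+\eps}$ suitably truncated and mollified so as to lie in $C_0^\infty(B_r(a_1))$, one computes, as $\eps\to0$, $\into u_\eps^2\dx=O(1)$, $\into\frac{u_\eps^2}{|x-a_1|^2}\dx\sim c/\eps\to\infty$, and $\into|\n u_\eps|^2\dx=\big(\tfrac{(N-2)^2}{4}+O(\eps)\big)\into\frac{u_\eps^2}{|x-a_1|^2}\dx+O(1)$, so that for fixed $r$
\[
\limsup_{\eps\to0}\frac{\lambda\into u_\eps^2\dx+\into|\n u_\eps|^2\dx}{\into Vu_\eps^2\dx}\leq\frac{(N-2)^2}{4(n-1)(1-\delta'(r))}.
\]
Since $\mu>(N-2)^2/(4(n-1))$, first fix $r$ so small that $\mu(n-1)(1-\delta'(r))>(N-2)^2/4$ and then $\eps$ so small that the quotient above is $<\mu$; this yields \eqref{good2}.

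For part (iii), $\semi=\hoi$ since $\Omega$ is bounded, and I would run the direct method. Let $(u_m)\subset\hoi$ satisfy $\into Vu_m^2\dx=1$ and $\into|\n u_m|^2\dx\to\mu^\star(\Omega)$; up to a subsequence $u_m\rightharpoonup u$ weakly in $\hoi$, $u_m\to u$ in $L^2(\Omega)$ and a.e. Since $V$ is bounded off the poles and $\Omega$ is bounded, $Vu_m^2\dx\rightharpoonup Vu^2\dx+\sum_{k=1}^n\nu_k\delta_{a_k}$ and $|\n u_m|^2\dx\rightharpoonup\kappa\geq|\n u|^2\dx+\sum_{k=1}^n\mu_k\delta_{a_k}$ weakly-$\ast$, with $\nu_k,\mu_k\geq0$. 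Localizing \eqref{classical_Hardy_inequality} around $a_k$ with a cutoff shrinking to $\{a_k\}$ — the commutator and remainder terms disappear because $u_m\to u$ in $L^2(\Omega)$ — gives the crucial bound $\mu_k\geq\frac{(N-2)^2}{4(n-1)}\nu_k$. Then $\mu^\star(\Omega)\geq\into|\n u|^2\dx+\sum_k\mu_k\geq\mu^\star(\Omega)\into Vu^2\dx+\frac{(N-2)^2}{4(n-1)}\sum_k\nu_k$, and substituting $\into Vu^2\dx=1-\sum_k\nu_k$ turns this into $\big(\tfrac{(N-2)^2}{4n-4}-\mu^\star(\Omega)\big)\sum_k\nu_k\leq0$; the hypothesis $\mu^\star(\Omega)<(N-2)^2/(4n-4)$ then forces $\nu_k=0$ for all $k$, so $\into Vu^2\dx=1$, $u\neq0$, and weak lower semicontinuity gives $\into|\n u|^2\dx\leq\mu^\star(\Omega)=\mu^\star(\Omega)\into Vu^2\dx$, i.e.\ $u$ is a minimizer in $\hoi$. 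For part (iv): when $n=2$, Theorem~\ref{prop1} gives $\mu^\star(\Omega)=(N-2)^2/4=\mu^\star(\rr^N)$; if some $u\in\semi\setminus\{0\}$ were a minimizer, its extension by zero $\tilde u$ would lie in $\mathcal{D}^{1,2}(\rr^N)$ (as $\Omega$ is bounded, $\semi=\hoi$ embeds isometrically by zero-extension) with $\int_{\rr^N}|\n\tilde u|^2\dx=\into|\n u|^2\dx$ and $\int_{\rr^N}V\tilde u^2\dx=\into Vu^2\dx\in(0,\infty)$, so $\tilde u$ would attain $\mu^\star(\rr^N)$ in $\mathcal{D}^{1,2}(\rr^N)$, contradicting the non-attainability established in \cite{cristi1}. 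I expect part (iii) to be the main obstacle: rigorously justifying the weak-$\ast$ splitting of the singular quadratic form $\into Vu_m^2\dx$ (for which a Brezis--Lieb lemma adapted to the weight $V$ is needed) and the concentration bound $\mu_k\geq\frac{(N-2)^2}{4(n-1)}\nu_k$ (obtained by letting the localization radius tend to $0$ in the estimates of part (i)).
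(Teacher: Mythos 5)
Your reduction of the remark to items (i)--(ii) of Theorem \ref{moregen} via the definition \eqref{weakoptimalHardy} is exactly the paper's reading, and for those two items (the only ones the remark needs) your plan is essentially the paper's proof: both rest on the local asymptotics $V(x)|x-a_i|^2\to n-1$ (the paper's \eqref{asympt.loc.int}) combined with the sharp classical Hardy inequality \eqref{classical_Hardy_inequality}, packaged in the paper as Lemma \ref{lema1.int} plus a cut-off argument. Your variations there are minor: an IMS partition of unity instead of the single cut-off $\xi_\eps$; explicit quasi-extremals $|x-a_1|^{-(N-2)/2+\eps}$ concentrated at a single pole instead of the paper's near-optimal test functions placed at all $n$ poles; and disposing of the $\lambda$-term via $\into u_\eps^2\dx=O(1)$ against $\into Vu_\eps^2\dx\to\infty$, where the paper instead shrinks the support so that $\lambda/(\mu-\mu')\le\inf_{U_\eps}V$. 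Where you genuinely diverge is in the two items not needed for the remark. For (iii) the paper avoids concentration-compactness measures entirely: it applies the already-established weak inequality with a constant $\mu\in(\mu^\star(\Omega)+\delta,\,(N-2)^2/(4n-4))$ to the remainder $e_n=u_n-u$, which tends to $0$ in $L^2(\Omega)$, and deduces $\into Ve_n^2\dx\to0$ directly; thus the measure splitting and the concentration bound $\mu_k\ge\frac{(N-2)^2}{4(n-1)}\nu_k$ that you flag as the main obstacle are never needed, which is the main advantage of the paper's route. For (iv) the paper argues self-containedly from identity \eqref{optint} (a minimizer would have to equal $C\prod_{i=1}^{2}|x-a_i|^{-(N-2)/2}$, which does not belong to $\semi$), whereas your zero-extension argument imports the non-attainability of $\mu^\star(\rr^N)$ from \cite{cristi1}; both are valid, yours being shorter but not self-contained.
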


Next we state the main results we obtain in the case of boundary singularities.

 It is nowadays a popular fact that, when switching from interior to boundary singularities, the Hardy constant $\mu^\star(\Omega)$ increases. In this latter case we underline in the next theorem that  $\mu^\star(\Omega)$ mainly turns on to depend on both the global and local geometry of the domain near the singular poles.

 \begin{Th}\label{t1} Assume $\Omega$ is either a ball, the exterior of a ball, or a  half-space in $\rr^N$, $N\geq 2$,  such that $a_1, \ldots, a_n\in \Gamma$, $n\geq 2$.  Then it holds
               \be\label{cond3}
               \mu^\star(\Omega)=\frac{N^2}{n^2}.
               \ee
If $\Omega$ is a ball, the constant $\mu^\star(\Omega)$ in \eqref{cond3} is attained in $\hoi$ if and only if $n\geq 3$.\\
If $\Omega$ is the exterior of a ball then $\mu^\star(\Omega)$ is attained in $\semi$ when $N\geq 3$ and $n\geq 3$, whereas if $\Omega$ is a half-space in $\rr^N$ then $\mu^\star(\Omega)$ is attained in $\semi$ when $n\geq 3$.
 \end{Th}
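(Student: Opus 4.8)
\emph{Overview.} My plan is to reduce the three geometries to a single one --- the half-space --- by a Kelvin transform, and then to exhibit an explicit extremal function for the half-space which makes both the sharp constant and the attainability dichotomy transparent.

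\emph{Step 1: Kelvin reduction.} Since a ball, the exterior of a ball and a half-space are all images of one another under inversions centered at a boundary point, I would fix $p\in\Gamma$ which is not one of the poles and let $T_p$ be the inversion centered at $p$ (of radius $R$). Then $T_p$ maps $\Omega$ onto a half-space, carries $\Gamma$ onto the bounding hyperplane, and sends each $a_i$ to a point $a_i'=T_p(a_i)$ on that hyperplane. The associated Kelvin transform $u\mapsto Ku$ (multiplication of $u\circ T_p$ by the conformal factor $|\cdot-p|^{-(N-2)}$, up to constants) is an isometry for the Dirichlet seminorm $\into|\n u|^2\dx$, and using the identity $|T_p(x)-T_p(y)|=R^2|x-y|/(|x-p|\,|y-p|)$ one checks that all conformal weights cancel in the potential term, so that $\into Vu^2\dx$ is sent to $\int \tilde V\,(Ku)^2$ with $\tilde V=\sum_{i<j}|a_i'-a_j'|^2/(|y-a_i'|^2|y-a_j'|^2)$, a potential of exactly the same type. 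As $K$ is a bijection between the respective spaces $C_0^\infty$ and extends to an isometry of their completions $\semi$, the value $\mu^\star$ is unchanged and minimizers correspond to minimizers. (For the exterior of a ball one must take $N\geq3$, so that $\mathcal D^{1,2}$ of an exterior domain is a genuine Hilbert space; in dimension two the seminorm degenerates there, which is the source of that restriction.) Everything is thus reduced to proving that for $\rr_{+}^{N}$ with poles on $\{x_N=0\}$ one has $\mu^\star(\rr_{+}^{N})=N^2/n^2$, attained in $\mathcal D^{1,2}(\rr_{+}^{N})$ if and only if $n\geq3$.

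\emph{Step 2: the extremal function and the lower bound.} Set $\ff(x)=x_N\prod_{i=1}^n|x-a_i|^{-N/n}$, which is positive on $\rr_{+}^{N}$ and vanishes on $\{x_N=0\}$. Computing $\D\ff/\ff=\D\log\ff+|\n\log\ff|^2$, using that each $a_i$ lies on $\{x_N=0\}$ (hence $e_N\cdot(x-a_i)=x_N$, so that the $x_N^{-2}$ contributions cancel) together with the algebraic identity $\big|\sum_i(x-a_i)/|x-a_i|^2\big|^2=n\sum_i|x-a_i|^{-2}-V$, one obtains the exact equation
\be
-\D\ff=\frac{N^2}{n^2}\,V\,\ff\qquad\text{in }\rr_{+}^{N}.
\ee
Then, for $u\in C_0^\infty(\rr_{+}^{N})$, writing $u=\ff v$ and integrating by parts,
$$\int_{\rr_{+}^{N}}|\n u|^2\dx-\frac{N^2}{n^2}\int_{\rr_{+}^{N}}Vu^2\dx=\int_{\rr_{+}^{N}}\ff^2|\n v|^2\dx\ \geq\ 0,$$
so $\mu^\star(\rr_{+}^{N})\geq N^2/n^2$. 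Here the fact that the poles sit on $\Gamma$ is essential: the Dirichlet vanishing of $u$ forces $u\sim x_N$ near each $a_i$, which makes $Vu^2$ integrable near the poles even when $N=2$, and points carry zero $H^1$-capacity, so the integration by parts produces no spurious boundary or singular flux.

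\emph{Step 3: upper bound and attainability.} A local analysis shows $|\n\ff|^2\asymp|x-a_i|^{-2N/n}$ near each pole and $\asymp x_N^2|x|^{-2N}$ at infinity, so that $\ff\in\mathcal D^{1,2}(\rr_{+}^{N})$ (and $\int V\ff^2<\infty$) precisely when $N-2N/n>0$, i.e. $n\geq3$. For $n\geq3$ the equation in Step 2 gives $\int|\n\ff|^2\dx=(N^2/n^2)\int V\ff^2\dx$, so $\ff$ is a minimizer and $\mu^\star(\rr_{+}^{N})=N^2/n^2$ is attained; transporting $\ff$ back through $K$ yields a minimizer in $\hoi$ for the ball (bounded, so $\semi=\hoi$) and in $\semi$ for the exterior of a ball ($N\geq3$) and for the half-space. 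For $n=2$, and to get the upper bound in general, I would use $u_\eps=\eta_\eps\ff$ with $\eta_\eps$ a cut-off vanishing near the poles and near infinity; the equation gives
$$\frac{\int|\n u_\eps|^2\dx}{\int Vu_\eps^2\dx}=\frac{N^2}{n^2}+\frac{\int\ff^2|\n\eta_\eps|^2\dx}{\int Vu_\eps^2\dx},$$
and since for $n=2$ one has $\int Vu_\eps^2\dx\to\infty$ while $\int\ff^2|\n\eta_\eps|^2\dx$ stays bounded, the quotient tends to $N^2/4$; this proves $\mu^\star=N^2/n^2$ and, combined with a non-existence argument, that it is not attained when $n=2$, thereby giving the ``only if'' in the ball statement.

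\emph{Main obstacle.} The delicate steps are the rigorous integration by parts of Step 2 --- controlling the flux of $\ff\n\ff$ through shrinking spheres around the singular poles and ruling out any boundary contribution, which relies precisely on the boundary location of the poles --- and, for $n=2$, upgrading the heuristic ``the quotient would be minimized by $\ff\notin\mathcal D^{1,2}$'' to a genuine non-attainment statement: here one argues that near each boundary pole $V\sim(n-1)|x-a_i|^{-2}$, so the problem degenerates to the single-pole boundary Hardy inequality with the \emph{same} constant $N^2/4$, which is itself not attained, and a localization/strict-inequality argument propagates this to the global problem.
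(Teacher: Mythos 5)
Your proposal is correct in substance, and for the half-space it coincides with the paper's own argument: your $\varphi=x_N\prod_{i=1}^n|x-a_i|^{-N/n}$ is exactly the paper's weight $f\prod_i|x-a_i|^{-N/n}$ with $f=x_N$, your equation $-\Delta\varphi=(N^2/n^2)V\varphi$ is the condition $S(f,a_i)=0$ of \eqref{impcond4} (the $x_N^{-2}$ cancellation you exploit is the same mechanism), and the substitution $u=\varphi v$ plus logarithmic cut-offs is identity \eqref{impcon} plus the paper's minimizing sequences. Where you genuinely diverge is the ball and the exterior of a ball: the paper never transforms domains, but instead constructs a domain-adapted $f$ in each geometry ($r^2-|x-x_0|^2$, $|x-x_0|^2-r^2$, $x_N$), checks $S(f,a_i)=0$ directly from $a_i\in\Gamma$, and reads sharpness and (non-)attainment off \eqref{impcon} case by case. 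Your Kelvin reduction buys a single computation and makes the conformal covariance of the multipolar quotient explicit (the distance identity does cancel all conformal factors in $\int_\Omega Vu^2\dx$, and the Dirichlet form is preserved, for $N=2$ by plain conformal invariance without the Kelvin factor); the price is the bookkeeping of function spaces: for the exterior of a ball the image of $C_0^\infty(B^c_r(x_0))$ is $C_0^\infty$ of a half-space punctured at the interior point $T_p(\infty)$, so you need a zero-capacity density argument there, and this puncture is precisely where the restriction $N\geq3$ for attainment enters (the pulled-back minimizer behaves like $|x|^{2-N}$ at infinity). The paper's direct route avoids all of this and, as a by-product, isolates the structural condition \eqref{impcond4} that it later probes for other domains (e.g. the ellipse).

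One local soft spot: for the ``only if'' part when $n=2$, your fallback via non-attainment of the single-pole boundary Hardy constant and a localization/strict-inequality argument is vaguer than necessary. The clean argument, which the paper uses and which is already available from your Step 2 identity once extended to $\semi$, is that any minimizer would force the right-hand side $\int_\Omega \varphi^2|\n(u/\varphi)|^2\dx$ to vanish, hence $u$ would be a constant multiple of $\varphi$ (resp. of $f\prod_{i=1}^{2}|x-a_i|^{-N/2}$ in the original domain), which fails to belong to the energy space. With that replacement, and the space-correspondence details above made explicit, your outline establishes all assertions of the theorem.
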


  The main novelty of Theorem \ref{t1} with respect to classical Hardy-type inequalities is concerned with the attainability of $\mu^\star(\Omega)$ in some particular cases. This is a surprising property since, as far as we know, the attainability of the optimal constant $\mu^\star(\Omega)$  is not common in the class of Hardy inequalities.

Although Theorem \ref{t1} reflects optimal results, it only applies for a particular class of domains, with a specific geometry (``nice" domains).  In the following theorem we highlight weak Hardy-type inequalities which are independent of the geometry of $\Omega$.

\begin{Th}\label{te2}
Assume $\Omega\subset \rr^N$, $N\geq 2$, is a smooth bounded domain  such that $a_1, \ldots, a_n \in \Gamma$, $n\geq 2$.
We successively have
\begin{enumerate}[i).]
\item\label{item5} It holds that \be\label{bounds.bond}
\frac{(N-2)^2}{n^2} <   \mu^\star(\Omega)\leq  \frac{N^2}{4n-4}.
\ee
\item\label{item6}  For any parameter $\mu< N^2/(4n-4)$,
 there exists a finite constant  $c_\mu\in \rr$ which  depends on $\Omega$ and  $a_1, \ldots, a_n$,  so that the inequality
\be\label{good3}
c_\mu\into u^2 \dx+ \into |\n u|^2 \dx \geq \mu  \into V u^2 \dx,
\ee
is verified for any $u \in C_{0}^{\infty}(\Omega)$.
 \item\label{item7} For any $\mu> N^2/(4n-4)$  and any constant $\lambda\in \rr$ there exists $u_{\lambda, \mu}\in C_{0}^{\infty}(\Omega)$ such that
\be\label{good4}
\lambda \into u_{\lambda, \mu}^2 \dx+ \into |\n u_{\lambda, \mu}|^2 \dx <  \mu  \into V u_{\lambda, \mu}^2 \dx.
\ee
\item\label{item8}  In addition, if $\mu^\star(\Omega)< N^2/(4n-4)$  then $\mu^\star(\Omega)$ is attained in $\hoi$.
\end{enumerate}
\end{Th}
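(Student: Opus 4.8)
\noindent\textbf{Proof strategy for Theorem~\ref{te2}.}
Write $\mu^\star=\mu^\star(\Omega)$ and set $\mu_\star:=N^2/(4n-4)$, the expected non-compactness threshold. Everything will be deduced from one local fact at a boundary pole. Since $\Omega$ is smooth, near a fixed $a_i\in\Gamma$ one can flatten $\Gamma$ and compare $\Omega\cap B_r(a_i)$ with a half-space; together with the sharp single-pole \emph{boundary} Hardy inequality (best constant $N^2/4$, \cf \cite{MR2847469}) this will give, for every $\sigma<N^2/4$, a radius $r>0$ and a constant $C$ with
$$
\int_{\Omega\cap B_r(a_i)}|\n w|^2\dx+C\int_{\Omega\cap B_r(a_i)} w^2\dx\ \geq\ \sigma\int_{\Omega\cap B_r(a_i)}\frac{w^2}{|x-a_i|^2}\dx,\qquad w\in C_0^\infty(\Omega\cap B_r(a_i)).
$$
Since $\lim_{x\to a_i}|x-a_i|^2V(x)=n-1$, near $a_i$ the weight $V$ equals $(n-1)|x-a_i|^{-2}$ up to a bounded function, so the effective local Hardy constant of $V$ at each pole is exactly $N^2/(4(n-1))=\mu_\star$.

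For the right inequality in \eqref{bounds.bond} and for part~\ref{item7} I will fix one pole $a_1$, transplant to $\Omega$ the explicit almost-extremals of the scale-invariant half-space quotient $\int_{\rr_{+}^{N}}|\n\phi|^2\dx\,/\int_{\rr_{+}^{N}}|x|^{-2}\phi^2\dx$ (infimum $N^2/4$, not attained), cut them off in $B_\eps(a_1)$, and normalise $\into Vu_\eps^2\dx=1$; the curvature of $\Gamma$ and the remaining poles only produce errors vanishing as $\eps\to0$, so that $\into|\n u_\eps|^2\dx\to N^2/(4(n-1))=\mu_\star$ and $\into u_\eps^2\dx\to0$. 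The first limit gives $\mu^\star\leq\mu_\star$; the second shows that for $\mu>\mu_\star$ and any $\lambda\in\rr$ one has $\lambda\into u_\eps^2\dx+\into|\n u_\eps|^2\dx\to\mu_\star<\mu\into Vu_\eps^2\dx$, which is part~\ref{item7}. For part~\ref{item6} I will instead take a partition of unity $\{\eta_j\}_{j=0}^{n}$ with $\eta_j$ supported in $\Omega\cap B_r(a_j)$ for $j\geq1$ and $V$ bounded on $\mathrm{supp}\,\eta_0$, apply the IMS identity $\into|\n u|^2\dx=\sum_j\into|\n(\eta_ju)|^2\dx-\sum_j\into|\n\eta_j|^2u^2\dx$, use the displayed local inequality on each piece $j\geq1$ with $\sigma$ slightly larger than $\mu(n-1)$ (allowed since $\mu<\mu_\star$), absorb $\mu\into V(\eta_0u)^2\dx$ into a multiple of $\into u^2\dx$, and collect the finitely many bounded terms $|\n\eta_j|^2u^2$ into $c_\mu\into u^2\dx$; this gives \eqref{good3}.

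For part~\ref{item8}, take a minimising sequence $(u_k)\subset C_0^\infty(\Omega)$ with $\into Vu_k^2\dx=1$ and $\into|\n u_k|^2\dx\to\mu^\star$; it is bounded in $\hoi$, so $u_k\rightharpoonup u$ and $u_k\to u$ in $L^2(\Omega)$ by Rellich. A concentration-compactness analysis of the weak-$*$ limits of $|\n u_k|^2\dx$ and $Vu_k^2\dx$ will show that $Vu_k^2\dx$ can lose mass only at the poles, with weights $\nu_1,\dots,\nu_n\geq0$ satisfying $\sum_i\nu_i=1-\into Vu^2\dx$, while the local estimate forces $|\n u_k|^2\dx$ to concentrate at least $\mu_\star\nu_i$ at $a_i$; hence
$$
\mu^\star\ \geq\ \into|\n u|^2\dx+\mu_\star\sum_i\nu_i\ \geq\ \mu^\star\Big(1-\sum_i\nu_i\Big)+\mu_\star\sum_i\nu_i ,
$$
so $(\mu_\star-\mu^\star)\sum_i\nu_i\leq0$, forcing $\sum_i\nu_i=0$ when $\mu^\star<\mu_\star$; then $\into Vu^2\dx=1$, $u\not\equiv0$, and $\into|\n u|^2\dx\leq\mu^\star$ together with $\into|\n u|^2\dx\geq\mu^\star\into Vu^2\dx=\mu^\star$ shows $u$ is a minimiser in $\hoi$. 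Finally, for the left inequality in \eqref{bounds.bond}, extending any $u\in C_0^\infty(\Omega)$ by zero and using \eqref{wholespace}--\eqref{optimal.global} yields $\mu^\star\geq\mu^\star(\rr^N)=(N-2)^2/n^2$; were there equality, then since $(N-2)^2/n^2<\mu_\star$ the argument above would furnish a minimiser in $\hoi$ whose zero-extension attains $\mu^\star(\rr^N)$, contradicting the non-attainment from \cite{cristi1}.

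The main obstacle is the local boundary estimate at a \emph{curved} pole: one must flatten $\Gamma$ near $a_i$ while controlling, sharply and simultaneously, the perturbations of $\int|\n w|^2$ and of the singular weight $|x-a_i|^{-2}$, so that the constant $N^2/4$ is not degraded as $r\to0$. With that in hand, the rest --- the concentration-compactness bookkeeping certifying that stray $V$-mass sits only at the poles and carries Dirichlet energy at the optimal rate $\mu_\star$ --- is a more routine second layer, closely parallel to the interior-singularity analysis behind Theorem~\ref{moregen}.
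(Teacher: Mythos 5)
Your proposal is correct in substance and shares the paper's core architecture (localization at the poles, the sharp single-pole boundary constant $N^2/4$, the asymptotics $V(x)|x-a_i|^2\to n-1$, cut-offs, and a minimizing-sequence argument), but several steps are implemented differently. First, the step you flag as the ``main obstacle'' --- a local boundary Hardy inequality at a curved pole with constant not degraded as $r\to 0$ --- is not something you need to re-derive by flattening: it is exactly \cite[Lemma 2.1]{MR2847469}, which the paper quotes as \eqref{loc.bound.fall} (with the sharp constant $N^2/4$, no $L^2$ remainder, and with optimality of the constant); your relaxed version with $\sigma<N^2/4$ plus an $L^2$ term suffices for all your uses, but the stronger statement is available off the shelf, and its optimality also replaces your transplanted half-space almost-extremals in the proof of the upper bound in \eqref{bounds.bond} and of \eqref{good4} (the paper builds the test functions pole by pole via Lemma \ref{lema1}, exactly as in Lemma \ref{lema1.int}). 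Second, for item \ref{item8} the paper avoids concentration--compactness: it simply applies the already-proved weak Hardy inequality \eqref{good3}, with some $\mu\in(\mu^\star(\Omega)+\delta, N^2/(4n-4))$, to the remainder $e_k=u_k-u$, which tends to $0$ in $L^2(\Omega)$ by Rellich, and the resulting chain of inequalities forces $\into Ve_k^2\dx\to0$; your measure-theoretic bookkeeping is a valid but heavier route to the same dichotomy. Third, your treatment of the strict lower bound in \eqref{bounds.bond} is genuinely different and rather elegant: it handles all $n\geq2$ uniformly by contradiction with the non-attainment of $\mu^\star(\rr^N)=(N-2)^2/n^2$ in $\mathcal{D}^{1,2}(\rr^N)$ from \cite{cristi1}, using that $(N-2)^2/n^2<N^2/(4n-4)$ (true since $(n-2)^2\geq0$) so that item \ref{item8} would produce a minimizer whose zero-extension attains the constant in the whole space; the paper instead splits cases, invoking Theorem \ref{prop1} for $n\geq3$ and, for $n=2$, the attainment plus identity \eqref{optint}, which forces the would-be minimizer to equal $C\prod_{i=1}^{2}|x-a_i|^{-(N-2)/2}\notin\hoi$. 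Your IMS partition argument for \eqref{good3} is essentially the paper's single cut-off computation in the proof of Theorem \ref{moregen}; just make sure the partition satisfies $\sum_j\eta_j^2=1$ so the stated identity holds.
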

\begin{obs}
Theorem \ref{te2} extends the results of Theorem \ref{moregen} to the case of boundary singularities with a higher weak Hardy constant, i.e.
$$\mu_\omega^\star(\Omega)=\frac{N^2}{4n-4}.$$
In this case  we observe that $\mu_\omega^\star(\Omega)$ is strictly larger than $\mu^\star(\Omega)$ when $n\geq 3$ and $\Omega$ is either a ball or a hyperplane.  To our knowledge, this gap between the Hardy constants  $\mu^\star(\Omega)$ and $\mu_\omega^\star(\Omega)$ is unusual in the class of Hardy inequalities for convex domains.
\end{obs}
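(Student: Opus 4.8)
\textbf{Proof proposal for Theorem~\ref{te2}.}
The plan is to first pin down the weak Hardy constant as $\mu_\omega^\star(\Omega)=N^2/(4n-4)$ — this delivers items~\eqref{item6}, \eqref{item7} and the upper bound in~\eqref{item5} — then to obtain the attainability claim~\eqref{item8} by a concentration--compactness argument, and finally to derive the strict lower bound in~\eqref{item5} from~\eqref{item8}. Two local facts near a boundary pole $a_i$ drive everything. Since $|a_i-a_j|^2/|x-a_j|^2\to1$ as $x\to a_i$, for every small $r>0$ there are $\omega(r)\to0$ (as $r\to0$) and $C_r>0$ with
\be\label{te2plan.V}
\frac{(n-1)(1-\omega(r))}{|x-a_i|^2}-C_r\ \leq\ V(x)\ \leq\ \frac{(n-1)(1+\omega(r))}{|x-a_i|^2}+C_r,\qquad x\in B_r(a_i);
\ee
and the improved boundary Hardy inequality for a single pole on a smooth boundary (the localized form of the inequality with constant $N^2/4$ recalled in the Introduction, \cf \cite{MR2847469}) gives, for $u\in C_0^\infty(B_r(a_i)\cap\Omega)$,
\be\label{te2plan.H}
\into|\n u|^2\dx\ \geq\ \Big(\tfrac{N^2}{4}-\omega(r)\Big)\into\frac{u^2}{|x-a_i|^2}\dx\ -\ C_r\into u^2\dx,
\ee
with $\omega(r)$ absorbing the $C^2$ deviation of $\Gamma$ from its tangent hyperplane at $a_i$ (obtained by flattening the boundary near $a_i$).

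For~\eqref{item6} I would fix $\mu<N^2/(4n-4)$, pick $r>0$ so small that the $B_r(a_i)$ are pairwise disjoint and $\big(\tfrac{N^2}{4}-\omega(r)\big)\big/\big((n-1)(1+\omega(r))\big)>\mu$, and take a smooth partition of unity $1=\sum_{i=0}^n\xi_i^2$ on $\overline{\Omega}$ with $\mathrm{supp}\,\xi_i\subset B_r(a_i)$ for $i\geq1$ and $\mathrm{supp}\,\xi_0$ away from all poles, where $V$ is bounded. Using $\sum_i\xi_i\n\xi_i=0$ one has the localization identity $\into|\n u|^2\dx=\sum_i\into|\n(\xi_i u)|^2\dx-\into\big(\sum_i|\n\xi_i|^2\big)u^2\dx$ and $\mu\into Vu^2\dx=\sum_i\mu\into V(\xi_i u)^2\dx$; estimating each $i\geq1$ summand by combining the upper bound of~\eqref{te2plan.V} with~\eqref{te2plan.H} (the crucial gain being the factor $<1$ forced by the choice of $r$), the $i=0$ summand by boundedness of $V$, and collecting all zeroth-order terms into one constant, produces~\eqref{good3}; in particular $\mu_\omega^\star(\Omega)\geq N^2/(4n-4)$.

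For~\eqref{item7} and the upper bound in~\eqref{item5} I would flatten $\Gamma$ near one pole, say $a_1$, and test with $u_\eps(x)=\chi(x)\,w_\eps(x)$, where in flattening coordinates $y$ (with $a_1\leftrightarrow0$) $w_\eps=y_N|y|^{-N/2+\eps}$ is the half-space virtual minimizer $y_N|y|^{-N/2}$ regularized by $\eps>0$ and $\chi$ is a cutoff equal to $1$ near $a_1$ supported in a ball of radius $\delta$. A direct computation shows that $\into|\n u_\eps|^2\dx$ and $\into\frac{u_\eps^2}{|x-a_1|^2}\dx$ both grow like a constant times $1/\eps$ with ratio tending to $N^2/4$, while $\into Vu_\eps^2\dx=(n-1)(1+O(\delta))\into\frac{u_\eps^2}{|x-a_1|^2}\dx+O(1)$ and $\into u_\eps^2\dx$ stays bounded as $\eps\to0$. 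Hence $\into|\n u_\eps|^2\dx\big/\into Vu_\eps^2\dx\to N^2/(4n-4)$ after $\delta\to0$, giving $\mu^\star(\Omega)\leq N^2/(4n-4)$; and for $\mu>N^2/(4n-4)$ the quantity $\mu\into Vu_\eps^2\dx-\into|\n u_\eps|^2\dx$ tends to $+\infty$ and so exceeds $\lambda\into u_\eps^2\dx$ for each fixed $\lambda$ once $\eps$ is small, which is~\eqref{good4} with $u_{\lambda,\mu}=u_\eps$. With the previous paragraph this yields $\mu_\omega^\star(\Omega)=N^2/(4n-4)$ and the upper bound in~\eqref{item5} via $\mu^\star(\Omega)\leq\mu_\omega^\star(\Omega)$.

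Finally, for~\eqref{item8} and the lower bound in~\eqref{item5}: assume $\mu^\star(\Omega)<N^2/(4n-4)$ and take a minimizing sequence $(u_k)\subset\hoi$ with $\into Vu_k^2\dx=1$; it is bounded in $\hoi$, so along a subsequence $u_k\rightharpoonup u$ in $\hoi$, $u_k\to u$ in $\loi$ and a.e., and $|\n u_k|^2\dx\rightharpoonup|\n u|^2\dx+\sigma$, $Vu_k^2\dx\rightharpoonup Vu^2\dx+\sum_i\nu_i\delta_{a_i}$ weakly-$*$ (concentration can occur only at the poles, $V$ being bounded elsewhere), with $\sigma\geq0$ and $\nu_i\geq0$. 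Applying~\eqref{te2plan.H}--\eqref{te2plan.V} to $\phi_\eps u_k$ with $\phi_\eps$ a (logarithmic) cutoff at $a_i$, letting $k\to\infty$ and then shrinking $\phi_\eps$ — chosen so that $\into|\n(\phi_\eps u)|^2\dx\to0$ — yields $\sigma(\{a_i\})\geq\tfrac{N^2}{4(n-1)}\nu_i$. Testing the constant function against both measures, $\mu^\star(\Omega)=\into|\n u|^2\dx+\sigma(\overline{\Omega})\geq\mu^\star(\Omega)\into Vu^2\dx+\tfrac{N^2}{4(n-1)}\sum_i\nu_i\geq\mu^\star(\Omega)\big(\into Vu^2\dx+\sum_i\nu_i\big)=\mu^\star(\Omega)$, so equality holds throughout; since $\tfrac{N^2}{4(n-1)}>\mu^\star(\Omega)$ strictly, all $\nu_i=0$, hence $\into Vu^2\dx=1$, $u\not\equiv0$, $\into|\n u|^2\dx=\mu^\star(\Omega)$, i.e. $u$ minimizes in $\hoi=\semi$. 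And if $\mu^\star(\Omega)$ equalled $(N-2)^2/n^2$ — which, using $n^2\geq4(n-1)$ and $(N-2)^2<N^2$, is $<N^2/(4n-4)$ — then \eqref{item8} just proved would furnish a nonzero minimizer $u\in\hoi$, whose zero extension $\tilde u\in\mathcal{D}^{1,2}(\rr^N)$ would satisfy $\int_{\rr^N}|\n\tilde u|^2\dx=\tfrac{(N-2)^2}{n^2}\int_{\rr^N}V\tilde u^2\dx=\mu^\star(\rr^N)\int_{\rr^N}V\tilde u^2\dx$, contradicting the fact from \cite{cristi1} that $\mu^\star(\rr^N)$ is not attained; hence $\mu^\star(\Omega)>(N-2)^2/n^2$. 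The main obstacle will be making~\eqref{te2plan.H} rigorous on a curved domain with the sharp constant $N^2/4$ and a quantitative, uniform control of $\omega(r)$ (via boundary flattening), together with the careful bookkeeping of the limit measures in~\eqref{item8} needed to extract $\sigma(\{a_i\})\geq\tfrac{N^2}{4(n-1)}\nu_i$.
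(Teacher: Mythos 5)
Your proposal is correct in substance and delivers the content of the remark, namely $\mu_\omega^\star(\Omega)=N^2/(4n-4)$ via items \eqref{item6}--\eqref{item7}; note only that the strict gap with $\mu^\star(\Omega)$ for balls and half-spaces does not follow from your bound $\mu^\star(\Omega)\leq N^2/(4n-4)$ alone but needs the value $\mu^\star(\Omega)=N^2/n^2$ from Theorem \ref{t1}, which you take as given. Your route differs from the paper's in three places. For both directions of the weak inequality the paper goes through Lemma \ref{lema1}: it cites \cite[Lemma 2.1]{MR2847469} as a black box (the local boundary Hardy inequality with the sharp constant $N^2/4$ and its optimality near each pole) and then repeats verbatim the cut-off scheme of Theorem \ref{moregen}; you instead re-derive both facts by flattening the boundary and testing with the regularized half-space profile $y_N|y|^{-N/2+\eps}$ concentrated at a single pole (which does suffice, since $V\sim (n-1)|x-a_i|^{-2}$ there). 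Your version is more self-contained but must control the metric distortion of the flattening while keeping the sharp constant, exactly the obstacle you flag; the paper's is shorter at the price of citing optimality. For attainability (item \eqref{item8}) the paper avoids concentration--compactness measures entirely: it splits $u_k=u+e_k$, applies the already-proved weak inequality to the remainder $e_k\to0$ in $L^2(\Omega)$ with some $\mu\in(\mu^\star(\Omega)+\delta,\,N^2/(4n-4))$, and concludes $\into Ve_k^2\dx\to0$; your measure-decomposition argument with $\sigma(\{a_i\})\geq\frac{N^2}{4(n-1)}\nu_i$ is a valid, if heavier, alternative. Finally, for the strict lower bound in \eqref{bounds.bond} the paper uses, for $n\geq3$, the quantitative gain of Theorem \ref{prop1} (identity \eqref{optint} plus Lemma \ref{lema2}), and for $n=2$ a contradiction through \eqref{optint} showing the would-be minimizer $C\prod_{i=1}^{2}|x-a_i|^{-(N-2)/2}\notin\hoi$; your contradiction via zero extension and the non-attainability of $\mu^\star(\rr^N)$ in $\mathcal{D}^{1,2}(\rr^N)$ from \cite{cristi1} works for $N\geq3$, but for $N=2$ (allowed in Theorem \ref{te2}) that citation is unavailable, and there you should simply observe that a minimizer with $\into|\n u|^2\dx=0$ and $\into Vu^2\dx=1$ cannot exist in $\hoi$. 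None of these differences is a gap; they are workable alternatives to the paper's argument.
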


\begin{obs}
  Similar as in Theorems \ref{prop1}-\ref{moregen}, the results of Theorems \ref{t1}-\ref{te2} do not depend on the distances between the singular poles but on their number and their positions relative to the boundary.
\end{obs}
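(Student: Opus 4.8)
To justify the remark it suffices to isolate the single structural feature of $V$ that forces every constant in Theorems \ref{prop1}--\ref{te2} to be a function of $N$ and $n$ alone, and then to check that this feature is all that the proofs actually use. That feature is the elementary identity
\be
V(x)=\sum_{1\leq i<j\leq n}\Big|\frac{x-a_i}{|x-a_i|^2}-\frac{x-a_j}{|x-a_j|^2}\Big|^2,
\ee
obtained from the law of cosines $2(x-a_i)\cdot(x-a_j)=|x-a_i|^2+|x-a_j|^2-|a_i-a_j|^2$. It shows that the factors $|a_i-a_j|^2$ were placed in the numerators precisely to cancel, so that $V$ is a sum of squared differences of the inversions $y_i:=(x-a_i)/|x-a_i|^2$ with no explicit dependence on the mutual distances.

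The first consequence I would extract is the local profile of $V$ at a pole. Multiplying by $|x-a_k|^2$, every term with $i\neq k\neq j$ is $O(|x-a_k|^2)$, whereas each of the $n-1$ terms containing the index $k$, say $i=k$, contributes $|a_k-a_j|^2/|x-a_j|^2\to 1$ since $|x-a_j|\to|a_k-a_j|$. Hence
\be
\lim_{x\to a_k}|x-a_k|^2\,V(x)=n-1,
\ee
\emph{independently of the distances}: the singular strength of $V$ at each pole is the universal quantity $(n-1)/|x-a_k|^2$, sensitive only to whether $a_k$ lies in the interior or on $\Gamma$.

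I would then trace this profile through the two mechanisms that fix the constants. The upper bounds on the weak threshold, namely \eqref{good2} and \eqref{good4}, come from concentrating a test function on a ball $B_\eps(a_k)$ with $\eps<M$, so that $B_\eps(a_k)$ avoids the other poles and, in the boundary case, locally flattens $\Gamma$; on such a ball the Rayleigh quotient is asymptotic to $\int|\n u|^2/\big[(n-1)\int u^2|x-a_k|^{-2}\big]$, whose infimum is the single-pole Hardy constant divided by $n-1$, that is $(N-2)^2/(4n-4)$ for an interior pole and $N^2/(4n-4)$ for a boundary pole. Distances enter only through the harmless constraint $\eps<M$. The lower bounds \eqref{good1}, \eqref{good3} and the global inequalities \eqref{pushu}, \eqref{wholespace} rest instead on the sum-of-squares form above together with the expansion/vector-field argument of \cite{cristi1}: because $V=\sum_{i<j}|y_i-y_j|^2$ is manifestly configuration-homogeneous, the constant produced by that argument counts only the number $n$ of inversions, while any genuinely distance-dependent quantity is absorbed into the lower-order remainder $c_\mu\into u^2\dx$, which is explicitly allowed to depend on $\Omega$ and on $a_1,\dots,a_n$. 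The attainment clauses are decided by whether a minimizing sequence concentrates at a pole or stays compact in $\semi$; this dichotomy is governed by the strict gap between $\mu^\star(\Omega)$ and the weak constant and by the local decay of the extremal near each pole, both controlled solely by the universal strength $n-1$ and by the geometry of $\Omega$ at the pole.

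The delicate point, and the only place where one must argue rather than read off a formula, is the lower-bound step: I would have to confirm that in the expansion identity of \cite{cristi1} the distance-carrying numerators cancel against the multiplier cross-terms --- as the sum-of-squares identity above guarantees they must --- instead of surviving in the leading constant, and that every residual distance dependence is quarantined in the $L^2$-remainder. Once this homogeneity is made explicit the concentration direction is routine, and the distance-independence of all four constants, together with that of the attainment criteria, follows.
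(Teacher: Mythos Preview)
The paper gives no proof of this remark; it is stated as an observation that the reader is meant to extract from the statements and proofs of Theorems~\ref{prop1}--\ref{te2}. Your proposal is therefore not competing with anything in the paper, and as a justification it is essentially correct. Your sum-of-squares identity for $V$ is exactly the paper's identity \eqref{ident} stripped of the factor $(N-2)^2$, and your local profile $\lim_{x\to a_k}|x-a_k|^2 V(x)=n-1$ is the paper's \eqref{asympt.loc.int}. These two facts are indeed the reason the explicit constants $(N-2)^2/n^2$, $(N-2)^2/(4n-4)$, $N^2/n^2$, $N^2/(4n-4)$ carry no trace of $|a_i-a_j|$.

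Two small corrections to your narrative. First, the lower bounds \eqref{good1} and \eqref{good3} in the weak inequalities are \emph{not} obtained from the sum-of-squares/expansion identity of \cite{cristi1}; they come from the cut-off argument in the proofs of Theorems~\ref{moregen} and~\ref{te2}, which uses only the local profile (Lemmas~\ref{lema1.int} and~\ref{lema1}) together with the single-pole Hardy inequality, and dumps all configuration dependence into the $L^2$ remainder $c_\mu$. Second, be careful not to overclaim: for $n\geq 3$ with interior poles the paper only gives \emph{bounds} on $\mu^\star(\Omega)$, and the strict lower bound in \eqref{equality.constant} is proved via constants $\kappa_1,\kappa_2$ that \emph{do} depend on the configuration. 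The remark is asserting that the displayed numerical thresholds are universal, not that $\mu^\star(\Omega)$ itself is configuration-free in every case.
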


The paper is organized as follows.

In Section \ref{AlternativeApproach} we recall a general approach to prove Hardy inequalities which relies on an integral identity with weights. Applying the preliminary results in Section \ref{AlternativeApproach} we prove Theorem \ref{prop1} in Section \ref{int.sing}. Theorem \ref{moregen} is proven in Section \ref{int.sing} combining the local behavior of $V$ near the singular poles,  the Hardy inequality with interior singularities \eqref{classical_Hardy_inequality} and cut-off arguments. In Section \ref{bound.sing} we prove Theorem \ref{t1} by adapting the weights in Section \ref{AlternativeApproach} to both the structure of our multi-singular potential $V$ and the entire geometry of the domain.
Theorem \ref{te2} is shown at the end of Section \ref{bound.sing} following the same lines as the proof of Theorem \ref{moregen} but applying  the local  Hardy inequality with boundary singularities \eqref{classical_Hardy_inequality} instead. In Section \ref{com} we point out some important remarks, comments and  open problems.

\section{A general approach to multipolar Hardy inequalities}\label{AlternativeApproach}
In this preliminary section we discuss a general method to prove multipolar Hardy inequalities based on the famous Agmon-Allegretto-Piepenbrick Theorem which asserts, roughly speaking,  that a Schr\"{o}dinger operator $L$ is nonnegative in a domain $\Omega$ if and only if there exists a positive solution in $\Omega$ of the equation $Lu=0$ (\cf \cite{agmon, MR0374628, MR0342829}).

This standard method was already used in \cite{cristi1} for $\Omega=\rr^N$ in the context of interior singularities.

Let $\Omega\subset \rr^N$ and $a_1, \ldots, a_n\in \overline{\Omega}$, $n\geq 2$. Then we recall that for any $u \in C_0^1(\Omega\setminus \{a_1, a_2,
\ldots a_n\})$ and any $\phi\in C^2(\Omega\setminus\{a_1, \ldots, a_n\})$ with $\phi>0$ in $\Omega$ ($\phi$ is allowed to be singular at any of the poles $a_1, \ldots, a_n$),  the following identity is verified
\begin{equation}\label{gen}
\into \left( |\n u|^2 +\frac{\D \phi}{\phi}u^2\right)\dx =\into \left|\n
u- \frac{\n \phi}{\phi} u\right|^2 \dx =\into \phi^2|\n (u
\phi^{-1})|^2 \dx.
\end{equation}
Identity \eqref{gen} follows easily by integration by parts but it gives rise to  important applications.

To be more specific,  we can remark from \eqref{gen} that  the  Schr\"{o}dinger operator  $L_\mu=-\Delta -\mu V$  is nonnegative in $\Omega$ if there exists a distribution $\phi\in \mathcal{D}'(\Omega)$  satisfying
$$-\frac{\Delta \phi}{\phi}= \mu V , \textrm{ a.e. in }\Omega, \quad \phi> 0 \textrm{ in } \Omega, \quad \phi \in C^2(\Omega\setminus\{a_1, \ldots, a_n\}).$$
or equivalently, if there exists a solution for the problem
\be\label{supersol}
\left\{
\begin{array}{cc}
L_\mu \phi(x)= 0, & \textrm{ a.e.  } x\in  \Omega\\
\phi(x)>0,  & \forall x\in \Omega.\\
\phi \in C^2(\Omega\setminus\{a_1, \ldots, a_n\}). &\\
\end{array}\right.
\ee
In view of \eqref{gen}, solutions $\phi's$ as in \eqref{supersol}, that we will call ``weights", allow in particular to get the Hardy inequality
\be\label{hardpot}
\into |\n u|^2 \dx \geq \mu \into V u^2 \dx, \quad \forall u\in C_{0}^{1}(\Omega\setminus \{a_1, \ldots, a_n\}).
 \ee
 The extension of \eqref{hardpot} to the appropriate functional space $\semi$    is possible since $C_0^1(\Omega\setminus \{a_1, \ldots, a_n\})$ is dense in $\semi$ (see for instance \cite{hiddenenergy} for similar density results in the case of a single pole).
Here, a precise aim is to construct weights $\phi$ satisfying \eqref{supersol} in order to maximize $\mu$ in \eqref{hardpot}. We will analyze both situations in which the poles are located either in the interior or on the boundary of $\Omega$.

As in  \cite{cristi1} we first consider $\phi$ in \eqref{gen} of the form
\be\label{produ}
\phi=(\phi_1\phi_2\ldots \phi_n)^{\frac{1}{n}},
\ee
where the weights $\phi_i\in \mathcal{D}'(\Omega)$, $i\in \{1, \ldots, n\}$,  are supposed to satisfy some admissible conditions adapted to each singularity $a_i$.
Then $\phi$ verifies
\be\label{genhard1}
-\frac{\D \phi}{\phi}= \frac{1}{n^2}\sum_{1\leq i<j \leq n}\left|\frac{\n \phi_i}{\phi_i}-\frac{\n \phi_j}{\phi_j}\right|^2-\frac{1}{n}\sum_{i=1}^{n}\frac{\Delta \phi_i}{\phi_i}.
\ee
Assume now that each weight $\phi_i$, with $i\in \{1, \ldots, n\}$,  fulfills the conditions
\be\label{impcond2}
\left\{\begin{array}{ll}
 \phi_i\in C^{2}(\Omega \setminus
\{a_1, a_2, \ldots a_n\}),  & \\
  \phi_i(x)>0 \textrm{ in } \Omega, &\\
 -\D \phi_i(x)= 0, \q \forall x\in \Omega\setminus\{a_1, \ldots, a_n\}.   &
\end{array}\right.
\ee
Then, according to \eqref{produ}-\eqref{impcond2}  and \eqref{gen} we conclude that any  $ u \in \semi$ verifies
\begin{equation}\label{buni}
\into |\n u|^2 \dx - \frac{1}{n^2}\sum_{1\leq i<j \leq n} \into \left|\frac{\n \phi_i}{\phi_i}-\frac{\n \phi_j}{\phi_j}\right|^2 u^2 \dx
=\into \left |\n \left(u \prod_{i=1}^{n} \phi_i^{-1/n}\right)\right|^2 \prod_{i=1}^{n}\phi_i^{2/n} \dx.
\end{equation}
Identity \eqref{buni} is very important in our analysis since it applies to the proofs of  Theorem \ref{prop1} and Theorem \ref{t1} in both contexts of  interior and boundary singularities, by choosing the weights $\phi_i$ in an efficient way, as emphasized in the next sections.
 \section{Interior singularities: proofs of Theorems \ref{prop1}-\ref{moregen}}\label{int.sing}
This section is devoted to prove both Theorem \ref{prop1} and Theorem \ref{moregen}. For the proof of Theorem \ref{prop1} we will make use of \eqref{buni} as follows.

\subsection{Proof of Theorem \ref{prop1}}\label{prooft1}  We split the proof in several steps.

\noindent{\bf Step I: election of the weights $\phi_i$ in \eqref{buni}.}
In \cite{cristi1}, in the context of interior singularities, when $\Omega=\rr^N$  we chose
$$\phi_i=|x-a_i|^{-(N-2)},$$
   (this weight corresponds to the fundamental solution of the Laplacian at the point $a_i$) which  in particular  satisfies \eqref{impcond2} for all $i\in \{1, \ldots, n\}$ and the following identity holds true:
\be\label{ident}
\sum_{1\leq i<j \leq n}\left|\frac{\n \phi_i}{\phi_i}-\frac{\n \phi_j}{\phi_j}\right|^2=(N-2)^2 V.
\ee
Applying  \eqref{ident} and \eqref{buni} we obtain
\begin{multline}\label{optint}
\into |\n u|^2 \dx - \frac{(N-2)^2}{n^2} \into V u^2 \dx\\
=\into \left|\n \left( u \prod_{i=1}^{n} |x-a_i|^{(N-2)/n}\right)\right|^2  \prod_{i=1}^{n} |x-a_i|^{-2(N-2)/n} \dx  >0, \quad \forall u\in \semi.
\end{multline}

Next we apply \eqref{optint} to show \eqref{equality.constant}. We treat separately the bipolar case $n=2$ and the multipolar case $n\geq 3$.

\noindent{\bf Step II: Proof of \eqref{equality.constant} in the case $n=2$.}

In view of \eqref{wholespace} we easily obtain that  $\mu^\star(\Omega)\geq (N-2)^2/4$. In order to prove the reverse inequality it suffices to build a minimizing sequence $\{u_\eps\}_{\eps>0}\subset \semi$ satisfying
\be\label{limit.interior}
\frac{\into |\n u_\eps|^2\dx }{\into V u_\eps^2 \dx } \longrightarrow \frac{(N-2)^2}{4}, \quad \textrm{ as }\eps\rightarrow 0.
\ee
 Observe that the minimizing sequence proposed in \cite{cristi1} for the proof of \eqref{optimal.global} is not suitable here because its support spreads everywhere in $\rr^N$ when $\eps$ tends to zero.   On the contrary, in the following we manually construct a minimizing sequence $\{u_\eps\}_{\eps>0}$ as in  \eqref{limit.interior} with the support concentrated near the singular poles when $\eps$ tends to zero.

 So,   let $\eps>0$ small enough such that $\eps<\min\{1,  M^2/4\}$. Then, it follows that  $B_{\eps^{1/2}}(a_1)\cap B_{\eps^{1/2}}(a_2)=\emptyset$ and  $\cup _{i=1}^{2} B_{\eps^{1/2}}(a_i)\subset \Omega$.

 Let us also define the continuous functions $\theta_\eps$ given by
\be\label{theta.eps}
\theta_\eps(x):=\left\{
\begin{array}{ll}
  0, & x\in B_{\eps^2}(a_i), \quad i\in \{1,2\}, \\[3pt]
  \frac{\log\left(|x-a_i|/\eps^2\right)}{\log 1/\eps},  &  x \in B_{\eps}(a_i)\setminus B_{\eps^2}(a_i), \quad i\in \{1,2\}, \\[3pt]
  \frac{2\log\left(\eps^{1/2}/|x-a_i|\right)}{\log 1/\eps}, & x\in B_{\eps^{1/2}}(a_i)\setminus B_{\eps}(a_i), \quad i\in \{1, 2\}, \\[3pt]
  0,  & \textrm{ otherwise}.
\end{array}\right.
\ee
Observe that $\theta_\eps\in \hoi$ is compactly supported in $\Omega$  with the support localized in annulus regions surrounding the singularities. Motivated by  \eqref{optint} we consider the sequence  $\{u_\eps\}_{\eps>0}\subset \hoi(\subset \semi)$ defined by
\be\label{seq.int}
u_\eps(x):=\prod_{i=1}^{2} |x-a_i|^{-(N-2)/2}\theta_\eps(x).
\ee
For $u_\eps$ in \eqref{seq.int} identity \eqref{optint}  becomes \be\label{ident.int}
\into |\n u_\eps|^2\dx- \frac{(N-2)^2}{4}\into V u_\eps^2 \dx  =\into |\n \theta_\eps|^2 \prod_{i=1}^{2} |x-a_i|^{-(N-2)}\dx.
\ee
  Detailing the computations for $\n \theta_\eps$ we split the right hand side of \eqref{ident.int} in $I_1+I_2$, where
$$I_1:=\left(\log \frac{1}{\eps}\right)^{-2}  \sum_{i=1}^{2}\int_{B_\eps(a_i)\setminus B_{\eps^2}(a_i)} |x-a_i|^{-2} \prod_{j=1}^{2} |x-a_j|^{-(N-2)}\dx,$$
$$ I_2 := 4\left(\log \frac{1}{\eps}\right)^{-2}  \sum_{i=1}^{2}\int_{B_{\eps^{1/2}}(a_i)\setminus B_{\eps}(a_i)} |x-a_i|^{-2} \prod_{j=1}^{2} |x-a_j|^{-(N-2)}\dx. $$
Besides, for $\eps<\min\{1, M^2/4\}$ we have
\begin{equation}\label{tavica}
\frac{M}{2} < |x-a_j| <\frac 3 2 d, \quad \forall x\in B_{\eps^{1/2}}(a_i), \  \forall  i, j \in \{1, 2\}, \  i\neq j.
 \end{equation}
 Then we successively obtain
\begin{align*}
I_1 & \leq \left(\log \frac{1}{\eps}\right)^{-2} \left(\frac{M}{2}\right)^{2-N} \sum_{i=1}^{2}\int_{B_\eps(a_i)\setminus B_{\eps^2}(a_i)} |x-a_i|^{-N} \dx  \nonumber\\
&= 2 \left(\log \frac{1}{\eps}\right)^{-2} \left(\frac{M}{2}\right)^{2-N}  \omega_N \int_{\eps^2}^{\eps} \frac{1}{r} \dr =  2 \left(\frac{M}{2}\right)^{2-N} \omega_N \left(\log \frac{1}{\eps}\right)^{-1},
\end{align*}
where $\omega_N$ is the $(N-1)$-Hausdorff measure of the unit sphere
$S^{N-1}$ in $\rr^N$. From above we get
\be\label{upper.bound1}
I_1\leq C_1 \left(\log \frac{1}{\eps}\right)^{-1},
\ee
for some positive constant $C_1$ independent of $\eps$.

Analogously,  it  follows that
\begin{align}\label{upper.bound2}
I_2 & \leq C_2 \left(\log \frac{1}{\eps}\right)^{-1},
\end{align}
for some $C_2>0$ independent of $\eps$.  Particularly, the upper bounds \eqref{upper.bound1}-\eqref{upper.bound2} yield to
\be\label{unif}
\into |\n \theta_\eps|^2 \prod_{i=1}^{2} |x-a_i|^{-(N-2)}\dx \leq  C_3 \left(\log \frac{1}{\eps}\right)^{-1},
\ee
where $C_3=C_1+C_2$.
Next we will determine lower bounds for the potential term
$$ I_V:=\into V u_\eps^2 \dx.$$
First we obviously have $I_V>I_{V,1}$ where
$$I_{V,1} :=\int_{\cup_{i=1}^{2} B_{\eps}(a_i)\setminus B_{\eps^2}(a_i)} V u_\eps^2 \dx. $$
Applying the upper bound in \eqref{tavica}, explicit computations lead to
\begin{align*}
I_{V,1} &= d^2  \left(\log \frac{1}{\eps}\right)^{-2} \sum_{i=1}^{2}\int_{B_{\eps}(a_i)\setminus B_{\eps^2}(a_i)} \left(\log \frac{|x-a_i|}{\eps^2}\right)^{2} \prod_{j=1}^{2} |x-a_j|^{-N}\dx \nonumber\\
& \geq d^2 \left(\log \frac{1}{\eps}\right)^{-2} \left(
\frac{3}{2}d \right)^{-N} \sum_{i=1}^{2}\int_{B_{\eps}(a_i)\setminus B_{\eps^2}(a_i)} \left(\log \frac{|x-a_i|}{\eps^2}\right)^{2} |x-a_i|^{-N}\dx \nonumber\\
&= 2^{N+1} 3^{-N} d^{2-N} \left(\log \frac{1}{\eps}\right)^{-2} \omega_N \int_{\eps^2}^{\eps} \left(\log \frac{r}{\eps^2}\right)^{2} \frac{1}{r} \dr \nonumber\\
&= \left(\frac{2}{3}\right)^{N+1} d^{2-N} \omega_N \left(\log \frac{1}{\eps}\right).
\end{align*}
 Therefore, there exists $C_4>0$ such that
  \begin{equation}\label{lower.bound1}
  I_{V} \geq C_4 \log \frac{1}{\eps}.
  \end{equation}
 Combining \eqref{unif}-\eqref{lower.bound1} with \eqref{ident.int} we finally obtain
  $$\frac{(N-2)^2}{4} \leq \frac{\into |\n u_\eps|^2 \dx}{\into V u_\eps^2 \dx }\leq  \frac{(N-2)^2}{4}+\frac{C_3}{C_4} \left(\log \frac{1}{\eps}\right)^{-2}.$$
  Letting $\eps\to 0$, we complete the proof of  \eqref{equality.constant} in the case $n=2$. Moreover, the minimizing sequence $u_\eps$ in \eqref{seq.int} also applies to prove \eqref{optimal.global} in unbounded domains $\Omega$  since the support of $u_\eps$ concentrates near the singular poles as $\eps$ tends to zero.

\noindent{\bf Step III. Proof of \eqref{equality.constant} in the case $n\geq 3$.}

 In order to prove the strict lower bound in \eqref{equality.constant} we employ identity \eqref{optint} and the following  weighted Hardy inequality

\begin{lema}\label{lema2} Let  $\Omega\subset \rr^N$, $N\geq 3$, be an open bounded domain such that $a_1, \ldots, a_n\in \Omega$. Then, for any $\varphi\in C_{0}^{1}(\Omega \setminus\{a_1, \ldots, a_n\})$ and any $i \in \{1, \ldots, n\}$  we have
\be\label{niceineq}
\into |\n \varphi|^2 |x-a_i|^{-2(N-2)/n} \dx \ge \frac{(N-2)^2\left(1-\frac{2}{n}\right)^2}{4} \into \varphi^2 |x-a_i|^{-2(N-2)/n-2}  \dx.
\ee
\end{lema}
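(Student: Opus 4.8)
The plan is to recognize \eqref{niceineq} as the classical sharp weighted Hardy inequality centered at the single pole $a_i$, and to prove it by a completing‑the‑square argument. First I would translate coordinates so that $a_i=0$ and set $\alpha:=2(N-2)/n$; since $N-2-\alpha=(N-2)(1-2/n)$, the constant on the right‑hand side of \eqref{niceineq} is precisely $\big(\frac{N-2-\alpha}{2}\big)^2$, i.e. the sharp constant associated with the weight $|x|^{-\alpha}$. When $n=2$ one has $\alpha=N-2$, hence $N-2-\alpha=0$ and \eqref{niceineq} is trivially true; so only $n\ge 3$ requires an argument, and there $N-2-\alpha>0$ since $N\ge 3$.

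For the main step, fix $\beta\in\rr$ and expand the nonnegative quantity
\[
0\le \into \Big|\n\varphi-\beta\,\frac{x}{|x|^2}\,\varphi\Big|^2 |x|^{-\alpha}\dx
=\into |\n\varphi|^2|x|^{-\alpha}\dx-\beta\into \frac{x\cdot\n(\varphi^2)}{|x|^{2+\alpha}}\dx+\beta^2\into \varphi^2 |x|^{-2-\alpha}\dx .
\]
Because $\varphi\in C_0^1(\Omega\setminus\{a_1,\dots,a_n\})$ is compactly supported in $\Omega$ and vanishes near every pole, $\varphi^2\,x/|x|^{2+\alpha}$ is a $C^1$ vector field with compact support in $\Omega$, so the divergence theorem produces no boundary term and
\[
-\into \frac{x\cdot\n(\varphi^2)}{|x|^{2+\alpha}}\dx=\into \varphi^2\,\mathrm{div}\Big(\frac{x}{|x|^{2+\alpha}}\Big)\dx=(N-2-\alpha)\into \varphi^2|x|^{-2-\alpha}\dx .
\]
Substituting yields, for every $\beta$, the bound $\into|\n\varphi|^2|x|^{-\alpha}\dx\ge -\big(\beta^2+\beta(N-2-\alpha)\big)\into\varphi^2|x|^{-2-\alpha}\dx$; choosing $\beta=-(N-2-\alpha)/2$ maximizes the right‑hand coefficient at $(N-2-\alpha)^2/4$ and gives exactly \eqref{niceineq}.

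I do not anticipate a genuine obstacle here: the only delicate point is justifying the integration by parts near the pole $a_i$, which is immediate because $\varphi$ vanishes in a neighbourhood of each $a_j$, so all integrands are smooth and compactly supported in $\Omega$ and neither boundary nor singular contributions arise. Equivalently, one may rewrite the divergence identity as $(N-2-\alpha)\into \varphi^2|x|^{-2-\alpha}\dx=-2\into\varphi\,\n\varphi\cdot \frac{x}{|x|^{2+\alpha}}\dx$, pair $|x|^{-\alpha/2}\n\varphi$ against $-\varphi\,\frac{x}{|x|^{2+\alpha/2}}$ on the right‑hand side, and apply the Cauchy--Schwarz inequality, which reproduces the same constant $(N-2-\alpha)^2/4$.
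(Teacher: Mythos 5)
Your proof is correct and essentially the same as the paper's: the paper writes the weight in divergence form, integrates by parts, and applies Cauchy--Schwarz, which is exactly the computation you describe (your completing-the-square argument with the optimal $\beta$ is just a repackaging of it, and you even note the Cauchy--Schwarz version at the end). Your explicit treatment of the trivial case $n=2$, where the constant vanishes, is a small extra care the paper's proof glosses over (it divides by $(N-2)(1-2/n)$), but this does not change the substance.
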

\begin{proof}
We start to estimate the right hand side in \eqref{niceineq}. Writing the potential term in the divergence form and using integration by parts we successively get
\begin{align}\label{for1}
 \into \varphi^2 |x-a_i|^{-2(N-2)/n-2}  \dx &= \frac{1}{(N-2)(1-2/n)}\into \textrm{div}\left(\frac{x-a_i}{|x-a_i|^{2(N-2)/n+2}} \right) \varphi^2 \dx\nonumber\\
 &=-\frac{2}{(N-2)(1-2/n)} \int \varphi \nabla \varphi \cdot \frac{x-a_i}{|x-a_i|^{2(N-2)/n+2}} \dx.
\end{align}
Applying the Cauchy-Schwarz inequality  in \eqref{for1} we obtain
\begin{align*}
\into \varphi^2 |x-a_i|^{-2(N-2)/n-2}  \dx &\leq \frac{2}{(N-2)(1-2/n)} \left(\into |\nabla \varphi |^2 |x-a_i|^{-2(N-2)/n} \dx \right)^{1/2}\times\\
&\times  \left( \into \varphi^2 |x-a_i|^{-2(N-2)/n-2} \dx  \right)^{1/2}
  \end{align*}
  which is equivalent to \eqref{niceineq} after taking the squares. The proof of Lemma \ref{lema2} is now finished.
\end{proof}
Now we turn back to Step III and we set
  $$\kappa_1:=\inf_{x\in \Omega} \frac{\prod_{i=1}^{n} |x-a_i|^{-2(N-2)/n}}{\sum_{i=1}^{n} |x-a_i|^{-2(N-2)/n}}, \quad \kappa_2:=\inf_{x\in \Omega} \frac{\sum_{i=1}^{n} |x-a_i|^{-2(N-2)/n-2}}{V \prod_{i=1}^{n} |x-a_i|^{-2(N-2)/n}}.$$
  Observe that, $0< \kappa_1, \kappa_2 < \infty$. Indeed, when we focus on the definition of $\kappa_1$ we notice that both terms involved in the quotient behave like $|x-a_i|^{-2(N-2)/n}$ (up to a multiplicative constant) at each singular pole $a_i$, $i\in \{1, \ldots, n\}$. Whereas, far from the singularities both the numerator and denominator are bounded and strictly positive. It happens  similarly with the terms in the quotient of $\kappa_2$.

Now let $u\in C_0^1(\Omega\setminus\{a_1, \ldots, a_n\})$ and set $\varphi:= u \prod_{i=1}^{n} |x-a_i|^{(N-2)/n}$. Next, in view of \eqref{optint}, Lemma \ref{lema2} applied to $\varphi$  and the definitions of $\kappa_1$ and $\kappa_2$ we successively get
  \begin{align}\label{import1}
  \into |\n u|^2 \dx -\frac{(N-2)^2}{n^2}\into V u^2 \dx & = \into |\n \varphi |^2 \prod_{i=1}^{n} |x-a_i|^{-2(N-2)/n} \dx \nonumber\\
  & \geq \kappa_1 \sum_{i=1}^{n} \into |\n \varphi|^2 |x-a_i|^{-2(N-2)/n} \dx\nonumber\\
  & \geq \kappa_1 \frac{(N-2)^2\left(1-\frac{2}{n}\right)^2}{4}\sum_{i=1}^{n} \into \varphi^2 |x-a_i|^{-2(N-2)/n-2} \dx\nonumber\\
  &\geq \kappa_1 \kappa_2 \frac{(N-2)^2\left(1-\frac{2}{n}\right)^2}{4} \into V \varphi^2 \prod_{i=1}^{n} |x-a_i|^{-2(N-2)/n} \dx \nonumber\\
  &= \kappa_1 \kappa_2 \frac{(N-2)^2\left(1-\frac{2}{n}\right)^2}{4} \into V u^2 \dx.
  \end{align}
Therefore we obtain,
 $$ \into |\n u|^2 \dx \geq \left(\frac{(N-2)^2}{n^2}+ \kappa_1 \kappa_2 \frac{(N-2)^2\left(1-\frac{2}{n}\right)^2}{4} \right)\into V u^2 \dx,$$
 which by density can be extended to any $u\in \semi$. In consequence, we conclude that
 \begin{align*}
 \mu^\star(\Omega)&\geq \frac{(N-2)^2}{n^2}+ \kappa_1 \kappa_2 \frac{(N-2)^2\left(1-\frac{2}{n}\right)^2}{4}\\
 & > \frac{(N-2)^2}{n^2},
 \end{align*}
 since $n\geq 3$. This yields to the proof of the strict lower bound in \eqref{equality.constant}.
\endproof

The proof of the upper bound in \eqref{equality.constant} is based on the following lemma which also applies to the proof of Theorem \ref{moregen}.

\begin{lema}[local results for interior singularities]\label{lema1.int}
Assume $\Omega\subset \rr^N$, $N\geq 3$, is an open domain such that $a_1, \ldots, a_n\in \Omega$, $n\geq 2$.
For every $\eps>0$ small enough,  there exists $U_{\eps}$ a neighborhood  of $\cup_{i=1}^{n} \{a_i\}$ in $\Omega$ such that:
\begin{enumerate}[(i).]
\item\label{1.int} For any $u\in C_{0}^{\infty}(U_\eps)$ it follows that
\be\label{local.hardy.int}
\into |\n u|^2\dx> \left(\frac{(N-2)^2}{4(n-1)}-\eps \right)\into V u^2\dx.\ee
\item\label{2.int} There exists $u_\eps\in C_{0}^{\infty}(U_\eps)$ satisfying
\be\label{local.revers.int}
\into |\n u_\eps|^2 \dx < \left(\frac{(N-2)^2}{4(n-1)}+\eps\right)\into  V u_\eps^2 \dx.
\ee
\end{enumerate}
\end{lema}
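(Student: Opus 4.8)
The plan is to reduce everything to the single-pole Hardy inequality by localizing at each singularity, using the elementary observation that near $a_i$ the multipolar potential behaves like $(n-1)|x-a_i|^{-2}$. First I would record that, among the terms of $V$, only the $n-1$ ones carrying the factor $|x-a_i|^{-2}$ blow up at $a_i$ while the remaining terms stay bounded, whence $V(x)|x-a_i|^2=\sum_{j\neq i}|a_i-a_j|^2/|x-a_j|^2+O(|x-a_i|^2)\longrightarrow n-1$ as $x\to a_i$; there being finitely many terms, this is uniform in the sense that for every $\eta>0$ there is $r(\eta)>0$ with $(n-1)-\eta\le V(x)|x-a_i|^2\le(n-1)+\eta$ on $B_{r(\eta)}(a_i)$, for every $i$. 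Given $\eps$ I would then fix a small $\eta=\eta(\eps)$ (its size pinned down only at the very end, so that it serves both parts) and define $U_\eps:=\bigcup_{i=1}^n B_{r_\eps}(a_i)$ with $r_\eps:=\min\{r(\eta),M/2\}$; the constraint $r_\eps<M/2$ guarantees that the closed balls $\overline{B_{r_\eps}(a_i)}$ are pairwise disjoint and contained in $\Omega$.

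For part (i): given $u\in C_0^\infty(U_\eps)$, $u\not\equiv0$, I would split $u=\sum_i u_i$ with $u_i$ the restriction of $u$ to $B_{r_\eps}(a_i)$ extended by zero (legitimate since the supports are disjoint), so that $\into|\n u|^2\dx=\sum_i\into|\n u_i|^2\dx$ and $\into Vu^2\dx=\sum_i\into Vu_i^2\dx$. Applying the classical one-pole Hardy inequality \eqref{classical_Hardy_inequality} to each $u_i$ with the sharp constant $\lambda^\star=(N-2)^2/4$ — strictly, since $\lambda^\star$ is not attained and at least one $u_i$ is nontrivial — and then using $V|x-a_i|^2\le(n-1)+\eta$ on $\mathrm{supp}\,u_i$, I obtain
\[
\into|\n u|^2\dx>\frac{(N-2)^2}{4}\sum_i\into\frac{u_i^2}{|x-a_i|^2}\dx\ge\frac{(N-2)^2}{4((n-1)+\eta)}\into Vu^2\dx .
\]
Since $\tfrac{(N-2)^2}{4((n-1)+\eta)}\to\tfrac{(N-2)^2}{4(n-1)}$ as $\eta\to0^+$, choosing $\eta$ small enough makes the prefactor larger than $\tfrac{(N-2)^2}{4(n-1)}-\eps$, which is \eqref{local.hardy.int}.

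For part (ii): I would concentrate at a single pole, say $a_1$, reproducing for one singularity the construction of Step II. With a second small parameter $\delta$ satisfying $B_{\delta^{1/2}}(a_1)\subset B_{r_\eps}(a_1)$, take $\psi_\delta$ the one-pole logarithmic cut-off supported in $B_{\delta^{1/2}}(a_1)\setminus B_{\delta^2}(a_1)$ (as in \eqref{theta.eps}) and $u_\delta:=|x-a_1|^{-(N-2)/2}\psi_\delta\in C_0^\infty(U_\eps)$ (as in \eqref{seq.int}); this is supported away from $a_1$, so identity \eqref{gen} with $\phi=|x-a_1|^{-(N-2)/2}$ (for which $-\Delta\phi/\phi=\tfrac{(N-2)^2}{4}|x-a_1|^{-2}$) applies and gives
\[
\into|\n u_\delta|^2\dx=\frac{(N-2)^2}{4}\into|x-a_1|^{-N}\psi_\delta^2\dx+\into|x-a_1|^{-(N-2)}|\n\psi_\delta|^2\dx .
\]
By the very computations of Step II (estimates \eqref{upper.bound1}--\eqref{lower.bound1}), the last integral is $O((\log\tfrac1\delta)^{-1})$ while $\into|x-a_1|^{-N}\psi_\delta^2\dx$ grows at least like $\log\tfrac1\delta$; combining this with the lower bound $\into Vu_\delta^2\dx\ge((n-1)-\eta)\into|x-a_1|^{-N}\psi_\delta^2\dx$ (valid because $V\ge((n-1)-\eta)|x-a_1|^{-2}$ on $\mathrm{supp}\,\psi_\delta$) yields
\[
\frac{\into|\n u_\delta|^2\dx}{\into Vu_\delta^2\dx}\le\frac{1}{(n-1)-\eta}\Big(\frac{(N-2)^2}{4}+O\big((\log\tfrac1\delta)^{-2}\big)\Big)\longrightarrow\frac{(N-2)^2}{4((n-1)-\eta)}\quad(\delta\to0).
\]
Choosing $\eta$ small (so $\tfrac{(N-2)^2}{4((n-1)-\eta)}<\tfrac{(N-2)^2}{4(n-1)}+\tfrac\eps2$) and then $\delta$ small (so the remainder is below $\eps/2$) gives \eqref{local.revers.int} with $u_\eps:=u_\delta$; a final mollification — harmless since on the fixed annulus all weights are bounded, hence the integrals are $H^1$-continuous there — upgrades $u_\eps$ to a genuine $C_0^\infty(U_\eps)$ function preserving the strict inequality. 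The step I expect to require the most care is not any individual estimate but the coordination of the two independent small scales: $\eta$ (equivalently $r_\eps$), which controls how tightly $V$ is squeezed between $((n-1)\pm\eta)|x-a_i|^{-2}$ and must be fixed once and for all, simultaneously for parts (i) and (ii); and only afterwards the concentration radius $\delta$ of the test function in part (ii). Once the order of quantifiers is correct, the rest is just the classical single-pole Hardy inequality plus a one-pole rerun of the log cut-off bounds already established in Step II.
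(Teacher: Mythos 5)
Your proposal is correct, and for part (i) it is essentially the paper's argument: disjoint balls $B_{r_\eps}(a_i)$, the uniform two-sided bound $(n-1)-\eta\le V|x-a_i|^2\le (n-1)+\eta$ coming from $\lim_{x\to a_i}V|x-a_i|^2=n-1$, splitting $u=\sum_i u_i$, and the one-pole Hardy inequality \eqref{classical_Hardy_inequality} on each ball, with the small parameter chosen so the prefactor exceeds $(N-2)^2/(4(n-1))-\eps$. The only genuine divergence is in part (ii): the paper simply invokes the \emph{optimality} of $(N-2)^2/4$ in the local inequality \eqref{loc.bound.fall.int} to pick near-optimal functions $u_{i,\eps}\in C_0^\infty(B_{r_\eps'}(a_i))$ at every pole and sums them, whereas you re-derive a near-optimizer concentrated at a single pole, via identity \eqref{gen} with $\phi=|x-a_1|^{-(N-2)/2}$ and the logarithmic cut-off of Step II, followed by a mollification to restore smoothness. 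Both are valid (one pole indeed suffices, since the lower bound $V\ge((n-1)-\eta)|x-a_1|^{-2}$ holds on the support); the paper's route is shorter because it quotes the known sharpness of the local constant, while yours is self-contained, exhibits the explicit $O((\log\frac1\delta)^{-2})$ error rate, and makes the order of the quantifiers ($\eta$, hence $r_\eps$, fixed before the concentration scale $\delta$) fully transparent — the same ordering the paper implements through its explicit choices of $\delta_\eps$ and $\delta_\eps'$.
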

\begin{proof}
 Roughly speaking,  the proof follows combining the local behavior of the potential $V$ near each pole $a_i$ and the  Hardy inequality \eqref{classical_Hardy_inequality} with the singularity shifted at each $a_i$. Indeed, we first remark that $V$ can also be expressed as
\be
V(x)=\frac{1}{|x-a_i|^2}\left[\sum_{j=1, j\neq i}^{n} \frac{|a_i-a_j|^2}{|x-a_j|^2}+ O(|x-a_i|^2) \right], \textrm{ as } x\to a_i,  \nonumber
\ee
for any $i\in \{1, \ldots, n\}$. In consequence,
\be\label{asympt.loc.int}
\lim_{x\to a_i}V(x)|x-a_i|^2=(n-1).
\ee
  Let $r>0$ small enough such that
   $$\cup_{i=1}^{n} B_r(a_i)\subset \Omega \quad  \textrm{ and } \quad  B_r(a_i)\cap B_r(a_j)=\emptyset,  \  \forall i\neq j.  $$
Then, in view of \eqref{classical_Hardy_inequality} we have    \be\label{loc.bound.fall.int}
\int_{B_r(a_i)} |\n u|^2\dx \geq \frac{(N-2)^2}{4}\int_{B_r(a_i)} \frac{u^2}{|x-a_i|^2}\dx, \quad \forall u\in C_{0}^{\infty}(B_{r}(a_i)), \quad \forall i.
\ee

Now, let be $\eps>0$ and set $\delta_\eps:=4(n-1)^2\eps/\left((N-2)^2-4(n-1)\eps\right)$ which is positive  for $\eps$ small enough.  In view of \eqref{asympt.loc.int}, there exists $r_\eps>0$  such that
\be\label{limit.int}
n-1-\delta_\eps <  V(x)|x-a_i|^2 < n-1+\delta_\eps, \quad \forall x\in B_{r_\eps}(a_i), \quad \forall i.
\ee
Actually,  we may assume  $r_\eps$ small enough such that $r_\eps <r$ and  consider then $U_\eps:=\cup_{i=1}^{n}  B_{r_\eps}(a_i)$.  Let be $u\in C_{0}^{\infty}(U_\eps)$ and denote  $u_i:=u_{|B_{r_\eps}(a_i)}$. Since $u$ is supported in $n$ disjoint balls that shrink around the singular poles, applying  \eqref{loc.bound.fall.int} and \eqref{limit.int} for each $u_i$ we successively have
\begin{align}\label{ineq2}
\into |\n u|^2 \dx &= \sum_{i=1}^{n} \into |\n u_i|^2 \dx \nonumber\\
& \geq \frac{(N-2)^2}{4}\frac{1}{n-1+\delta_\eps} \sum_{i=1}^{n} \into V u_i^2 \dx \nonumber\\
&  =\frac{(N-2)^2}{4}\frac{1}{n-1+\delta_\eps} \into V u^2 \dx\nonumber\\
&= \left(\frac{(N-2)^2}{4(n-1)}-\eps\right) \into V u^2 \dx,
\end{align}
and the proof of \eqref{local.hardy.int} is finally obtained.

For the proof of \eqref{local.revers.int} we mainly use the optimality of the constant $(N-2)^2/4$ in \eqref{loc.bound.fall.int} as follows.

Let $\eps>0$  and fix $0< \delta_\eps^{'}:=(4n^2-10n+6)\eps/\left((N-2)^2+4(n-1)\eps\right)$. Observe that $\delta_\eps^{'}< \delta_\eps$ and therefore \eqref{limit.int} is also valid for $\delta_{\eps}^{'}$ instead of $\delta_{\eps}$ for some $r_{\eps}^{'}$ instead of $r_\eps$, with $r_{\eps}^{'}< r_\eps$.

Due to the optimality of $(N-2)^2/4$ in \eqref{loc.bound.fall.int} for $r=r_{\eps}^{'}$, there exists $u_{i, \eps}\in C_{0}^{\infty}(B_{r_{\eps}^{'}}(a_i))$ such that
\be\label{ineq.invers.int}
\int_{B_{r_{\eps}^{'}}(a_i)} |\n u_{i,\eps}|^2 \dx \leq \left(\frac{(N-2)^2}{4}+\frac \eps 2 \right) \int_{B_{r_{\eps}^{'}}(a_i)} \frac{u_{i, \eps}^2}{|x-a_i|^2} \dx.
\ee
Then we consider $u_\eps:=\sum_{i=1}^{n} u_{i, \eps} \chi_{B_{r_{\eps}^{'}}(a_i)}$ which satisfies $u_\eps\in C_{0}^{\infty}(U_\eps)$.

Therefore, combining \eqref{ineq.invers.int} and \eqref{limit.int} we get \begin{align}
\into |\n u_\eps|^2 \dx &= \sum_{i=1}^{n}\into |\n u_{i, \eps}|^2 \dx\nonumber\\
& \leq \left(\frac{(N-2)^2}{4}+\frac \eps 2 \right)\frac{1}{n-1-\delta_\eps^{'}}\sum_{i=1}^{n} \into V u_{i, \eps}^{2}\dx\nonumber\\
&= \left(\frac{(N-2)^2}{4(n-1)}+\eps\right)\sum_{i=1}^{n} \into V u_{i, \eps}^{2}\dx\nonumber\\
&= \left(\frac{(N-2)^2}{4(n-1)}+\eps\right)\into V u_\eps^2 \dx.
\end{align}
The proof of Lemma \ref{lema1.int} is  finished.
\end{proof}
The upper bound in \eqref{equality.constant} is a direct consequence of \eqref{local.revers.int} in  Lemma \ref{lema1.int}. Finally, this completes the proof of Theorem \ref{prop1}.

\subsection{Proof of Theorem \ref{moregen}}\label{sec3}

In the following we will apply again Lemma \ref{lema1.int}.

\subsubsection*{ Proof of item \eqref{item1}.}
Let us consider $\eps>0$ such that $\mu<(N-2)^2/4-\eps$, and let $U_{r_\eps}:=\cup_{i=1}^{n}B_{r_\eps}(a_i)$,  $r_\eps>0$,  be a neighborhood of $\cup_{i=1}^{n}\{a_i\}$ in $\Omega$ such that $B_{r_\eps}(a_i)\cap B_{r_\eps}(a_j)=\emptyset $ for any $i\neq j$, $i, j\in \{1, \ldots, n\}$, satisfying \eqref{local.hardy.int}. Moreover, let $\xi_\eps$ be a $C^2(\overline{\Omega})$ cut-off function, so that $0\leq \xi_\eps\leq 1$, $\xi_\eps\equiv 1$ in  $\cup_{i=1}^{n}B_{r_\eps/2}(a_i)$ and the support of $\xi_\eps$ is contained in $U_{r_\eps}$. By integration by parts, for any $u\in C_{0}^{\infty}(\Omega)$ we have
\begin{align}\label{bb1}
\into |\n (\xi_\eps u)|^2 \dx &=\into \xi_\eps^2 |\n u|^2 \dx -\into (\xi_\eps \Delta \xi_\eps ) u^2 \dx \nonumber\\
&\leq \into |\n u|^2 \dx + C_\eps \into u^2 \dx,
\end{align}
for some constant $C_\eps>0$ depending on $\eps$.
Then, from  \eqref{local.hardy.int} in Lemma \ref{lema1.int} we get
\begin{align}\label{bb2}
\into |\n (\xi_\eps u)|^2 \dx& \geq \mu \into V \xi_\eps^2 u^2 \dx \nonumber\\
& \geq \mu \into V u^2 \dx - D_\eps \int_{\Omega\setminus U_{r_\eps/2}} u^2  \dx,
\end{align}
for some constant $D_\eps>0$, since $V$ is bounded in $\Omega\setminus U_{r_\eps/2}$.
Therefore, from \eqref{bb1}-\eqref{bb2} we conclude
$$\into |\n u|^2 \dx + (C_\eps-D_\eps)\into u^2 \dx \geq \mu \into V u^2 \dx, \quad \forall u \in C_{0}^{\infty}(\Omega), $$
and \eqref{good1} is proven.

\subsubsection*{ Proof of item \eqref{item2}.}
Let $\lambda\in \rr$ and $\mu> (N-2)^2/(4n-4)$ be fixed. Let us also consider $\mu'$ and $\eps>0$ such that  $\mu> \mu' > (N-2)^2/(4n-4)$ and  $(N-2)^2/(4n-4)+\eps < \mu'$.

 In view of \eqref{local.revers.int} in  Lemma \ref{lema1.int} there exists a neighborhood $U_{\eps}:=\cup_{i=1}^{n}B_{r_\eps}(a_i)\cap \Omega$ of $\cup_{i=1}^{n}\{a_i\}$ in $\Omega$ and  there exists $u_\eps\in C_0^\infty(U_{\eps})$ satisfying
 \begin{align}\label{up.hardy}
 \int_{U_{\eps}} |\n u_\eps|^2 \dx &< \left(\frac{N^2}{4(n-1)}+\eps\right)\int_{U_{\eps}}  V u_\eps^2\dx\nonumber\\
& < \mu' \int_{U_{\eps}} V u_\eps^2 \dx.
\end{align}
It follows that
$$(\mu-\mu')\int_{U_{\eps}} V u_\eps^2 \dx +\int_{U_{\eps}} |\n u_\eps|^2\dx < \mu \int_{U_{\eps}} V u_\eps^2 \dx.$$
Finally, it is enough to consider $r_\eps$ small enough such that
$$\frac{\lambda}{\mu-\mu'}\leq \inf_{U_{\eps}} V,$$
to conclude with
$$\lambda \int_{U_{\eps}}u_\eps^2 \dx  +\int_{U_{\eps}} |\n u_\eps|^2\dx < \mu \int_{U_{\eps}} V u_\eps^2 \dx. $$

\subsubsection*{ Proof of item \eqref{item3}.}\label{sec4}
Let $\{u_n\}_{n\in \nn}\subset \hoi$ be a normalized minimizing sequence such that
\be\label{con1}
\into |\n u_n|^2 \dx=\mu^\star(\Omega),
\ee
\be\label{con2}
\int Vu_n^2 \dx=1+ o(1), \textrm{ as } n \to \infty.
\ee
Then there exists $u\in \hoi$ s.t.
\be\label{con3}
u_n \rightharpoonup u \textrm{ in } \hoi.
\ee
 As a consequence of Theorem \ref{prop1} the embedding  $\hoi\hookrightarrow L^2(V; \dx)$ is continuous  and $\hoi$ is compact embedded in $L^2(\Omega)$. Then we get
\be\label{con4}
u_n \to u \textrm{ in } L^2(\Omega), \textrm{ and } u_n \rightharpoonup u \textrm{ in } L^2(V; \dx).
\ee
Put $e_n:=u_n-u$. We apply \eqref{con1} and \eqref{con2} to obtain
\be\label{con5}
\into |\n u|^2 \dx+ \into |\n e_n|^2 \dx= \mu^\star(\Omega)+o(1),
\ee
\be\label{con6}
\into  Vu^2 \dx + \into V e_n^2 \dx= 1+ o(1), \textrm{ as } n \to \infty.
\ee
Now, let us fix $\delta>0$ such that $\mu^\star(\Omega)+\delta < (N-2)^2/(4n-4)$ and choose $\mu \in (\mu^\star(\Omega)+\delta , (N-2)^2/(4n-4))$.

Then, according to Theorem \ref{moregen} it follows that
\be\label{con7}
\into |\n e_n|^2 \dx \geq \mu\into V e_n^2 \dx + o(1),
\ee
since $e_n\to 0$ in $L^2(\Omega)$ due to \eqref{con4}.

Therefore, applying \eqref{con5}-\eqref{con7} we successively have
\begin{align}\label{suc}
\mu^\star(\Omega)\into V u^2 \dx & \leq \into |\n u|^2 \dx= \mu^\star(\Omega)-\into |\n e_n|^2 \dx+o(1)\nonumber\\
& \leq \mu^\star(\Omega)-(\mu^\star(\Omega)+\delta)\into V e_n^2 \dx +o(1)\nonumber\\
&\leq \mu^\star(\Omega)\left(\into V u^2 \dx + \into V e_n^2 \dx \right)-(\mu^\star(\Omega)+\delta)\into V e_n^2 \dx + o(1)\nonumber\\
& = \mu^\star(\Omega)\into V u^2 \dx- \delta \into Ve_n^2 \dx +o(1).
\end{align}
Since $\delta>0$, we must have $\into V e_n^2 \dx \to 0$, as $n\to \infty$. Coming back to \eqref{con5} and \eqref{con6} we obtain $\into V u^2 \dx =1$ (so $u\neq 0$) and $\into |\n u|^2 \dx \leq \mu^\star(\Omega)$. Taking into account the definition of $\mu^\star(\Omega)$ we get that $u$ satisfies
$$\mu^\star(\Omega)\into V u^2 \dx = \into |\n u|^2 \dx,$$
so the constant is attained by a non-trivial function $u\in \hoi$.

\subsubsection*{ Proof of item \eqref{item4}.} Since $n=2$ we deduce from Theorem \ref{prop1} that $\mu^\star(\Omega)=(N-2)^2/4$.
 Assume that $\mu^\star(\Omega)=(N-2)^2/4$ is attained by a function $u\in \semi$. Then, $u$ satisfies identity \eqref{optint} in which the left hand side vanishes. Therefore, the right hand side also vanishes. This implies $u=C\prod_{i=1}^{2}|x-a_i|^{-(N-2)/2}$ for some nontrivial constant $C$. But such $u$ does not belong to $ \semi$ because neither $u$ vanishes on the boundary of $\Omega$, nor  $\n u\not \in L_{loc}^2(\Omega)$. Therefore, $\mu^\star(\Omega)$ is not attained when $n=2$.

 The proof of Theorem \ref{moregen} is now complete.

\section{Boundary singularities and proofs of  main results}\label{bound.sing}
In this section we analyze the situation in which all the singular poles are located at the boundary. More specific, we prove both Theorem \ref{t1} and Theorem \ref{te2}. For the proof of Theorem \ref{t1} we also start from identity \eqref{buni}  in which we have to consider other weights $\phi_i$  than  those chosen in the case of interior singularities.

\subsection{Preliminary weights}
 Of course, the choice of the weights $\phi_i=|x-a_i|^{-(N-2)}$  in Section \ref{prooft1}  is still admissible  in this new case. Despite of that, these weights do not provide an optimal Hardy constant $\mu^\star(\Omega)$ for boundary singularities, merely  ensuring $\mu^\star(\Omega)\geq (N-2)^2/n^2$.

 In order to get optimal results we design new weights $\phi_i$ in \eqref{buni} with the profile adapted to both the whole geometry of $\Omega$ and the corresponding singularity $a_i$, $i\in \{1, \ldots, n\}$.

In view of that, in order to remove the singularities $a_i$ we first introduce the weights
\be\label{view}
\phi_i(x)=f(x)|x-a_i|^{-N},\quad  x\in \Omega, \ i\in \{1, \ldots, n\},
\ee
where $f$ is a function independent of $a_i$ whose restrictions will be precise later. Then we obtain
\be\label{grad}
\frac{\n \phi_i}{\phi_i}= \frac{\n f}{f}-\frac{N(x-a_i)}{|x-a_i|^2}, \quad \quad
\left| \frac{\n \phi_i}{\phi_i}-\frac{\n \phi_j}{\phi_j} \right|^2= N^2 \frac{|a_i-a_j|^2}{|x-a_i|^2|x-a_j|^2}
\ee
and
\be\label{grad1}
\Delta \phi_i=\left(\D f |x-a_i|^2 -2 N \n f \cdot (x-a_i) +2 N f\right) |x-a_i|^{-N-2}.
\ee
Let us consider $f$ in \eqref{view} with the following properties:
 \be\label{impcond4}
\left\{\begin{array}{ll}
 f\in C^{2}(\Omega),  & \\
  f(x)>0, & \forall x\in \Omega, \\
S(f, a_i):=-\D f |x-a_i|^2 +2N \n f \cdot (x-a_i) -2N f = 0, & \q \forall x\in \Omega, \\
\end{array}\right.
\ee
for any $i\in\{1, \ldots, n\}$. Then, the general assumptions \eqref{impcond2} are satisfied for each $\phi_i$ in \eqref{view}. Therefore, applying \eqref{buni} with the weights in \eqref{view}  it follows
\begin{align}\label{impcon}
\into |\n u|^2 \dx &= \frac{N^2}{n^2} \into V u^2 \dx \nonumber\\
& + \into \left|\n\left[ u \left( f \prod_{i=1}^{n} |x-a_i|^{-\frac{N}{n}} \right)^{-1}\right] \right|^2 f^2 \prod_{i=1}^{n} |x-a_i|^{-\frac{2N}{n}} \dx
\end{align}
 for any $u\in \semi$.

Thus, if $f$ fulfills \eqref{impcond4} from \eqref{impcon} we get the improved Hardy inequality
\be\label{hardin}
\into |\n u|^2 \dx \geq \frac{N^2}{n^2}\into  V u^2 \dx, \quad \forall u\in \semi,
\ee
which is verified in several particular cases as shown in Theorem \ref{t1}.

\subsection{Proof of Theorem \ref{t1}}\label{sec1}
Next we apply identity \eqref{impcon} to prove Theorem \ref{t1} by constructing suitable weights $f$ as in \eqref{impcond4}. We analyze separately the cases when $\Omega$ is either a ball, the exterior of a ball or a half-space. For the simplicity of redaction when there is no risk of confusions we will maintain the notation $\Omega$ instead of $B_r(x_0)$, $B_r^c(x_0)$ or $\rr_+^N$.\\

\subsubsection{The case of a ball}\label{secball}

 Let $\Omega=B_r(x_0)$ for some   $r>0$ and $x_0\in \rr^N$, and let the singular poles $a_1, \ldots, a_n$ to be placed on the boundary of $\Omega$.  The proof follows several steps.

\noindent{\bf Step I: Election of $f$ in \eqref{impcond4}.} We show that $\mu^\star(B_r(x_0))\geq N^2/n^2$.
 For that to be true, it suffices  to build a function $f$ satisfying  \eqref{impcond4} in  $B_r(x_0)$. In view of that, let us introduce
\begin{equation}\label{natelec}
f(x):=r^2-|x-x_0|^2.
\end{equation}
 The first two conditions in \eqref{impcond4}  are trivially satisfied.
 The third condition in \eqref{impcond4} is also true: expanding the square we get for any $x\in B_r(x_0)$ that
\begin{align}\label{cr}
S(f;a_i)&= 2N |x-a_i|^2 -4N (x-x_0)\cdot (x-a_i) -2N (r^2- |x-x_0|^2)\nonumber\\
 &= -2Nr^2 +2N |(x-a_i)-(x-x_0)|^2 \nonumber\\
 &= -2N r^2 +2N |a_i-x_0|^2\nonumber\\
 &= 2N (|a_i-x_0|^2-r^2)\nonumber\\
 &=0,
\end{align}In the last identity we applied the fact that the singularities $a_i$ are located on the boundary of $B_r(x_0)$.

\noindent{\bf Step II: Optimality  and attainability of $N^2/n^2$ in \eqref{hardin} in the case $n\geq 3$.}

 Since $\Omega$ is bounded then \eqref{impcon} applies to functions in $\hoi$:
\begin{align}\label{impid}
\into |\n u|^2 \dx &- \frac{N^2}{n^2} \into V u^2 \dx \nonumber\\
 &=\into \left|\n\left[ u \left( f \prod_{i=1}^{n} |x-a_i|^{-\frac{N}{n}} \right)^{-1}\right] \right|^2 f^2 \prod_{i=1}^{n} |x-a_i|^{-\frac{2N}{n}} \dx, \quad \forall u\in \hoi,
\end{align}
where $f$ is as in \eqref{natelec}.

Next we consider the function $u$ defined by
\be\label{bob}
u:=(r^2-|x-x_0|^2)\prod_{i=1}^{n}|x-a_i|^{-\frac{N}{n}}
\ee
which belongs to $\hoi$ (since $n\geq 3$). Then,  for $u$ in \eqref{bob} the right hand side in \eqref{impid} vanishes and therefore $u$ satisfies
\be\label{attains}
\frac{\into |\n u|^2 \dx}{\into V u^2 \dx}=\frac{N^2}{n^2}.
\ee
From \eqref{impid} and \eqref{attains} it follows that $\mu^\star(\Omega)=N^2/n^2$  which is attained in $\hoi$ by $u$ in \eqref{bob}.

\noindent{\bf Step III: Optimality and non-attainability of $N^2/n^2$ in \eqref{hardin} in the case $n= 2$.}

Assume that $N^2/4$ is attained by, say, a function $v\in \hoi$. In view of \eqref{impid} we necessary obtain that $v$ must coincide, up to a multiplicative constant, with $u$ in \eqref{bob}. This is a contradiction because the function $u$ in \eqref{bob} does not belong to $\hoi$ (the details are let to the reader). Therefore, the constant $N^2/4$ is not attained.

 In order to prove the optimality of $N^2/4$ i.e.  $\mu^\star(\Omega)= N^2/4$, from \eqref{impid} it suffices to construct a minimizing sequence $\{u_\eps\}_{\eps>0}\subset \hoi$ such that
\begin{equation}\label{googdlimitt}
\lim_{\eps \searrow 0}\frac{\into |\n u_\eps|^2 \dx }{
 \into V u_\eps^2 \dx} =\frac{N^2}{4}.
\end{equation}
Such a sequence is a truncation of $u$ in \eqref{bob} in the neighborhood of the singular poles as follows.

Let $\eps>0$ aimed to be small $(\eps< \min\{1, d/2\})$ and consider the sequence $\{u_\eps\}_{\eps>0}\subset \hoi$ defined by
\be\label{seq}
u_\eps:=\theta_\eps f \prod_{i=1}^{2} |x-a_i|^{-\frac{N}{2}},  \ee
where $f=r^2-|x-x_0|^2$ and the cut-off function $\theta_\eps\in C(\rr^N)$, supported far from the singular poles,  is given by
\begin{equation}\label{teta}
\theta_\eps (x)=\left \{\begin{array}{ll}
                  0, & |x-a_i|\leq \eps^2, \q \forall i\in\{1,  2\},  \\ [5pt]
                  \frac{\log |x-a_i|/\eps^2 }{\log 1/\eps}, & \eps^2 \leq |x-a_i|\leq
                  \eps, \q \forall i\in\{1, 2\}, \\ [5pt]
                 1,&  \textrm{otherwise}.
                             \end{array}\right.
\end{equation}
Then, from \eqref{impid} and \eqref{seq} we obtain
\begin{align}\label{impide}
\into |\n u_\eps|^2 \dx &- \frac{N^2}{4} \into V u_\eps^2 \dx=\into \left|\n\theta_\eps \right|^2 f^2 \prod_{i=1}^{2} |x-a_i|^{-N} \dx,
\end{align}
which is equivalent to
\begin{align}\label{impidediv}
\frac{\into |\n u_\eps|^2 \dx}{\into V  u_\eps^2 \dx}- \frac{N^2}{4} =\frac{\into \left|\n\theta_\eps \right|^2 f^2 \prod_{i=1}^{2} |x-a_i|^{-N} \dx}{\into V  u_\eps^2 \dx}.
\end{align}
We conclude the proof  by showing that the right hand side in \eqref{impidediv} converges to zero as $\eps$ tends to zero.


Indeed, we observe that there exists a constant $C>0$ (independent of $\eps$) such that
\be\label{small}
\into V u_\eps^2 \dx > C>0, \quad \forall \eps>0.
\ee
On the other hand we remark that
\begin{align}\label{boun}
f(x)=\rho(x)(2r -\rho(x))\leq 2r |x-a_i|, \quad \forall x\in B_r(x_0), \ \forall i\in \{1, 2\},
\end{align}
where $\rho(x)=r-|x-x_0|$.
Then, taking into account the support of $\n \theta_\eps$ from \eqref{teta} and \eqref{boun} we successively obtain
\begin{align}\label{cici}
I_1:&=\into \left|\n\theta_\eps \right|^2  f^2 \prod_{i=1}^{2} |x-a_i|^{-N} \dx \nonumber\\
& \leq 4r^2 \sum_{i=1}^{2}\int_{B_\eps(a_i)\setminus B_{\eps^2}(a_i)}|\n \theta_\eps|^2 |x-a_i|^2 \prod_{j=1}^{2} |x-a_j|^{-N} \dx\nonumber\\
&  = 4r^2\sum_{i=1}^{2} \int_{B_\eps(a_i)\setminus B_{\eps^2}(a_i)}
\frac{1}{\log^2(1/\eps)} \prod _{j=1}^{2}
|x-a_j|^{-N}\dx.
\end{align}
Since
\begin{equation}
|x-a_j|\geq \frac{d}{2}, \q \forall x\in B_\eps(a_i), \q \forall j
\neq i, \q \forall i, j\in \{1,  2\},
\end{equation}
from \eqref{cici} we deduce  that
\begin{align}\label{lob4}
I_1 & \leq \frac{ 4r^2\big(
\frac{d}{2}\big)^{-N}}{\log^2(1/\eps)} \sum_{i=1}^{2}\int_{B_\eps(a_i)\setminus
B_{\eps^2}(a_i)}
|x-a_i|^{-N}\dx\nonumber\\
&=\frac{8r^2\omega_N \big( \frac{d}{2}\big)^{-N}}{\log^2(1/\eps)} \int_{\eps^2}^{\eps} r^{-1}\dr.
\end{align}
 From \eqref{lob4} we obtain that for some uniform constant $C>0$ we get
\begin{equation}\label{landau}
I_1\leq \frac{C}{\log (1/\eps)}, \textrm{ as } \eps\rightarrow 0.
\end{equation}
Applying  \eqref{landau}, \eqref{small} and \eqref{impidediv} we finally obtain \eqref{googdlimitt}.
$\hfill\Box$

\subsubsection{The case of the exterior of a ball}\label{secextball} Let us consider  $\Omega=B_{r}^{c}(x_0)=\rr^N\setminus B_r(x_0)$ for some $r>0$ and fix $x_0\in \rr^N$.
The proof is very similar to the one in Subsection \ref{secball} as follows.

\noindent{\bf Step I: Election of $f$ in \eqref{impcond4}.}  Let us consider \be\label{newf}
f(x):=|x-x_0|^2 -r^2.
\ee Remark that $f$ in \eqref{newf} is exactly the function in \eqref{natelec} with the opposite sign. Due to \eqref{cr},   $f$  in \eqref{newf} satisfies \eqref{impcond4} and therefore we deduce that $\mu^\star(B_{r}^{c}(x_0))\geq N^2/n^2$ which leads to \eqref{hardin}.

\noindent{\bf Step II:  Attainability of $N^2/n^2$ in \eqref{hardin} when $N\geq 3$ and $n\geq 3$.}  The attainability  goes similarly  to  the previous situation corresponding to a ball. Indeed,   if the constant $N^2/n^2$ in \eqref{hardin} were attained by some function $u\in \semi$,  from \eqref{impcon} we necessary have (up to a multiplicative constant)
\be\label{crufu}
u= (|x-x_0|^2-r^2)\prod_{i=1}^{n}|x-a_i|^{-\frac{N}{n}}.
\ee
Such $u$ belongs to $\semi$ when $N\geq 3$ and $n\geq 2$. This is true because $|\n u|^2$ is locally integrable near each singularity $a_i$  when  $n\geq3$, whereas the integrability at infinity is ensured by $N\geq 3$.

\noindent{\bf Step III: Optimality of $N^2/n^2$ in \eqref{hardin} for any $N\geq 2$ and $n\geq2$.} In the cases in which $N^2/n^2$ is not attained we have to proceed with an approximation argument similar to the case of a ball  in \eqref{googdlimitt}-\eqref{seq}.

However, the cut-off function $\theta_\eps$ defined in \eqref{teta} requires a different definition at infinity in this present case. Let $\eps>0$ small enough such that
$$B_\eps(a_i)\cap B_\eps(a_j)=\emptyset, \quad  i\neq j, \textrm{ and } \cup_{i=1}^{n} B_\eps(a_i)\subset \Omega. $$
 Next we consider
\begin{equation}\label{teta1}
\theta_\eps (x)=\left \{\begin{array}{ll}
                  0, & |x-a_i|\leq \eps^2, \q \forall i\in\{1, \ldots, n\},  \\ [5pt]
                  \frac{\log |x-a_i|/\eps^2 }{\log 1/\eps}, & \eps^2 \leq |x-a_i|\leq
                  \eps, \q \forall i\in\{1, \ldots, n\}, \\ [5pt]
                 1,&  x\in B_{1/\eps}(x_0)\setminus \cup_{i=1}^{n}
                 B_\eps(a_i),\\[5pt]
                 \frac{\log 1/(\eps^2 |x-x_0| )}{\log 1/\eps}, & x\in B_{1/\eps^2}(x_0)\setminus B_{1/\eps}(x_0),\\[5pt]
               0, & |x-x_0|\geq 1/\eps^2.
                             \end{array}\right.
\end{equation}
and we define the   sequence  $u_\eps:=\theta_\eps f \prod_{i=1}^{n}|x-a_i|^{-N/n}$ for $f$  in \eqref{newf} and $\theta_\eps$ in \eqref{teta1}. Applying \eqref{impcon} for these new weights we get
 \begin{align}\label{impide1}
\into |\n u_\eps|^2 \dx &- \frac{N^2}{n^2} \into V u_\eps^2 \dx=\into \left|\n\theta_\eps \right|^2 f^2 \prod_{i=1}^{n} |x-a_i|^{-2N/n} \dx.
\end{align}
 Remark that $\n \theta_\eps$ is supported in $\cup_{i=1}^{n} \left(B_{\eps}(a_i)\setminus B_{\eps^2}(a_i)\right)\cup  \left(B_{1/\eps^2}(x_0)\setminus B_{1/\eps}(x_0)\right)$.
 Next let us denote by $I_1$ and $I_2$ the integrals of the right hand side in \eqref{impide1} restricted to $\cup_{i=1}^{n}\left(B_{\eps}(a_i)\setminus B_{\eps^2}(a_i)\right)$  and  $ B_{1/\eps^2}(x_0)\setminus B_{1/\eps}(x_0)$ respectively.
Observe that $I_1$ was already computed in the case of the ball when $n=2$ and stands for a  quantity which converges to zero as $\eps$ tends to zero. When $n\geq 3$, $I_1$ tends to zero even more rapidly than in the case when $n=2$.

Now we proceed with  the estimates for $I_2$.   For $\eps>0$ small enough such that $\eps\leq  1/2r$,  it holds
\begin{equation}\label{bounds1}
|x-a_i|\geq \frac{1}{2}|x-x_0|, \q \forall x\in \rr^N \setminus
B_{1/\eps}(x_0), \ \forall i\in \{1, \ldots, n\}.
\end{equation}
Due to \eqref{bounds1} we obtain
\begin{align}\label{secc}
I_2&= \left( \log \frac{1}{\eps} \right)^{-2} \int_{B_{1/\eps^2}(x_0) \setminus
B_{1/\eps}(x_0)}  |x-x_0|^{-2} (|x-x_0|^2-r^2)^2  \prod_{i=1}^{n} |x-a_i|^{-2N/n}
\dx\nonumber\\
& \leq  \left( \log \frac{1}{\eps} \right)^{-2} \int_{B_{1/\eps^2}(x_0)\setminus
B_{1/\eps}(x_0)} |x-x_0|^{2} \prod_{i=1}^{n} \left(\frac{1}{2}|x-x_0|\right)
^{-2N/n}\dx \nonumber\\
 &= \left( \log \frac{1}{\eps} \right)^{-2} 2^{2N}
\int_{B_{1/\eps^2}(x_0)\setminus B_{1/\eps}(x_0)} |x-x_0|^{2-2N} \dx \nonumber\\
&= \left( \log \frac{1}{\eps} \right)^{-2} 2^{2N} \omega_N
\int_{1/\eps}^{1/\eps^2}  s^{1-N} \ds.
\end{align}
From \eqref{secc} we get that
\begin{equation}\label{seec}
I_2\leq\left\{
\begin{array}{ll}
  C\left(\log \frac{1}{\eps}\right)^{-1}, & N=2, \\
  C  \eps^{N-2} \left( \log \frac{1}{\eps}\right)^{-2}, & N\geq 3,
\end{array}\right. \textrm{ as } \eps \to 0,
\end{equation}
for some constant $C>0$ depending only on $N$ and $\omega_N$.
Combining \eqref{seec} with the similar estimates for $I_1$ obtained in the case of a ball, we end up with
\be\label{cioc}
\into \left|\n\theta_\eps \right|^2 f^2 \prod_{i=1}^{n} |x-a_i|^{-2N/n} \dx \to 0, \textrm{ as } \eps \to 0.
\ee
In addition, the uniform estimate in \eqref{small} remains valid in this case, which together with \eqref{cioc} yield to the optimality of $N^2/n^2$.
$\hfill\Box$

\subsubsection{The case of a half-space}\label{sechalfspace}

For simplicity, let us focus on the upper-half space  $\Omega=\rr_{+}^{N}$.
Once we have dealt with Subsections \ref{secball}-\ref{secextball}, the proof of this case is easy and natural.

 The inequality $\mu^\star(\rr_{+}^{N})\geq N^2/n^2$ derives from \eqref{impcon} for
\be\label{fhalf}
f=x_N,
\ee
 which  verifies \eqref{impcond4}.

 Again, if $u\in \semi$ were a minimizer for $N^2/n^2$, in view of \eqref{impcon} we should necessary have
\be\label{coc1}
u=x_N \prod_{i=1}^{n}|x-a_i|^{-\frac{N}{n}}.
\ee
Similarly as in Subsections \ref{secball}-\ref{secextball}, the function $u$ in \eqref{coc1} belongs to $\semi$ for any $n\geq3$.   So the constant $N^2/n^2$ is attained when $n\geq 3$. The optimality  of $N^2/n^2$ can be proven making use of  the minimizing sequence $u_\eps:=\theta_\eps f \prod_{i=1}^{n}|x-a_i|^{-N/n}$,  with $f$  as in \eqref{fhalf} and $\theta_\eps$ verifying
\begin{equation}\label{teta2}
\theta_\eps (x)=\left \{\begin{array}{ll}
                  0, & |x-a_i|\leq \eps^2, \q \forall i\in\{1, \ldots, n\},  \\[5pt]
                  \frac{\log |x-a_i|/\eps^2 }{\log 1/\eps}, & \eps^2 \leq |x-a_i|\leq
                  \eps, \q \forall i\in\{1, \ldots, n\}, \\[5pt]
                 1,&  x\in B_{1/\eps}(0)\setminus \cup_{i=1}^{n}
                 B_\eps(a_i),\\[5pt]
                 \frac{\log 1/(\eps^2 |x| )}{\log 1/\eps}, & x\in B_{1/\eps^2}(0)\setminus B_{1/\eps}(0),\\[5pt]
               0, & |x|\geq 1/\eps^2.
                             \end{array}\right.
\end{equation}
This approximation is relevant for $n=2$ since for any $n\geq 3$ the constant $N^2/n^2$ is attained. 
The computations are similar to those in Subsections \ref{secball}-\ref{secextball}. The details are let to the reader.  $\hfill \Box$

Thus, Theorem \ref{t1} is totally proven.

\subsection{Proof of Theorem \ref{te2}}\label{sect4}

For this proof we need to apply some local results similar to those in Lemma \ref{lema1.int} in the context of interior singularities. We state the following lemma.
\begin{lema}[local results for boundary singularities]\label{lema1}
Assume $\Omega\subset \rr^N$, $N\geq 2$, is a smooth bounded domain such that $a_1, \ldots, a_n\in \Gamma$, $n\geq 2$.
For every $\eps>0$ small enough,  there exists $U_{\eps}$ a neighborhood  of $\cup_{i=1}^{n} \{a_i\}$ in $\overline{\Omega}$ such that
\begin{enumerate}[(i).]
\item\label{1} For any $u\in C_{0}^{\infty}(U_\eps)$ it follows that
\be\label{local.hardy}
\into |\n u|^2\dx\geq \left(\frac{N^2}{4(n-1)}-\eps \right)\into V u^2\dx. \ee
\item\label{2} There exists $u_\eps\in C_{0}^{\infty}(U_\eps)$ such that
\be\label{local.revers}
\into |\n u_\eps|^2 \dx \leq \left(\frac{N^2}{4(n-1)}+\eps\right)\into  V u_\eps^2 \dx.
\ee
\end{enumerate}
\end{lema}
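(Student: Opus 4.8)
The plan is to mimic the proof of Lemma~\ref{lema1.int}, replacing the interior Hardy inequality \eqref{classical_Hardy_inequality} by its boundary analogue with constant $N^2/4$. First I would recall that when a single pole $a_i$ lies on the boundary of a smooth domain, one has the improved Hardy inequality
\[
\int_{\Omega} |\n v|^2 \dx \geq \frac{N^2}{4}\int_{\Omega} \frac{v^2}{|x-a_i|^2}\dx + (\text{lower order } L^2\text{-terms}), \quad \forall v\in C_0^\infty(\Omega),
\]
with optimal constant $N^2/4$ (this is the fact referred to in the Introduction via \cite{MR2847469}). By flattening the boundary near $a_i$ and using that $\Omega$ is smooth, one can localize: for every $\eta>0$ there is $r=r(\eta)>0$ so that on the half-ball-like neighborhood $\Omega\cap B_r(a_i)$ one has
\[
\int_{\Omega\cap B_r(a_i)} |\n v|^2 \dx \geq \left(\frac{N^2}{4}-\eta\right)\int_{\Omega\cap B_r(a_i)} \frac{v^2}{|x-a_i|^2}\dx, \quad \forall v\in C_0^\infty(\Omega\cap B_r(a_i)),
\]
and, by optimality, the constant $N^2/4-\eta$ here is essentially sharp so that a reverse-type test function exists. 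This is the boundary counterpart of \eqref{loc.bound.fall.int}.

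Next I would use the local behavior of the multipolar potential near a boundary pole, which is identical to the interior case: since $\lim_{x\to a_i} V(x)|x-a_i|^2 = n-1$ (the computation in \eqref{asympt.loc.int} does not see whether $a_i$ is interior or boundary), for every $\delta>0$ there is a radius $r_\delta>0$ with
\[
n-1-\delta < V(x)|x-a_i|^2 < n-1+\delta, \quad \forall x\in \Omega\cap B_{r_\delta}(a_i),\ \forall i.
\]
I would then pick $r_\eps$ smaller than both the Hardy-localization radius and $\tfrac12$ of the mutual distances $d$ between the poles, set $U_\eps := \cup_{i=1}^n (\Omega\cap B_{r_\eps}(a_i))$, and for $u\in C_0^\infty(U_\eps)$ split $u=\sum_i u_i$ with disjoint supports. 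Combining the boundary Hardy inequality on each piece with the potential estimate above gives
\[
\into |\n u|^2 \dx = \sum_i \into |\n u_i|^2 \dx \geq \Bigl(\tfrac{N^2}{4}-\eta\Bigr)\frac{1}{n-1+\delta}\sum_i \into V u_i^2\dx = \Bigl(\tfrac{N^2}{4}-\eta\Bigr)\frac{1}{n-1+\delta}\into V u^2\dx,
\]
and choosing $\eta,\delta$ appropriately in terms of $\eps$ (exactly as $\delta_\eps$ is calibrated in Lemma~\ref{lema1.int}) yields \eqref{local.hardy}. For \eqref{local.revers} I would instead invoke sharpness of the boundary Hardy constant on each small neighborhood to produce $u_{i,\eps}\in C_0^\infty(\Omega\cap B_{r_\eps'}(a_i))$ with $\int |\n u_{i,\eps}|^2 \le (\tfrac{N^2}{4}+\tfrac\eps2)\int u_{i,\eps}^2/|x-a_i|^2$, set $u_\eps=\sum_i u_{i,\eps}$, and use the lower bound $V|x-a_i|^2 > n-1-\delta_\eps'$ to get \eqref{local.revers}, again calibrating the small parameters as in the interior proof.

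The main obstacle is the first step: making the localized sharp boundary Hardy inequality rigorous. Unlike the interior case where \eqref{classical_Hardy_inequality} is a clean, classical, scale-invariant statement on any domain, the boundary version with the improved constant $N^2/4$ genuinely uses the geometry of $\Gamma$ near $a_i$ (it is exact on a half-space, and only holds up to curvature-dependent lower-order terms on a general smooth domain), so one must argue by a boundary-flattening diffeomorphism and a perturbation/scaling argument to absorb the error into the arbitrarily small $\eta$, together with a matching optimality statement to get the reverse inequality. Once this local boundary Hardy inequality with near-optimal constant is in hand, the rest of Lemma~\ref{lema1} is a verbatim transcription of the proof of Lemma~\ref{lema1.int} with $(N-2)^2$ replaced by $N^2$, and Theorem~\ref{te2} then follows from it by the same cut-off and minimizing-sequence arguments used to deduce Theorem~\ref{moregen} (item~\eqref{item5} upper bound from \eqref{local.revers}, item~\eqref{item5} strict lower bound from the already proven \eqref{hardin}-type fact and the $\kappa_1,\kappa_2$ argument adapted to boundary poles, items~\eqref{item6}--\eqref{item7} by the cut-off estimates \eqref{bb1}--\eqref{bb2}, and item~\eqref{item8} by the concentration-compactness splitting in the proof of item~\eqref{item3}).
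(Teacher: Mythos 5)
Your proposal matches the paper's proof: the paper likewise reduces the lemma to a local Hardy inequality with boundary singularity near each pole, combined with the asymptotics $\lim_{x\to a_i}V(x)|x-a_i|^2=n-1$ and the disjoint-support splitting and calibration of small parameters exactly as in Lemma \ref{lema1.int}. The only difference is that the obstacle you flag --- justifying the localized sharp constant $N^2/4$ --- is dispatched in the paper by citing \cite[Lemma 2.1]{MR2847469}, which directly provides \eqref{loc.bound.fall} with the exact constant $N^2/4$ and its optimality on $\overline{\Omega}\cap B_{r}(a_i)$ for $r$ small, so no boundary-flattening or $\eta$-loss argument is needed.
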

\noindent\textit{Sketch of the proof.} We need to apply the Hardy inequality with boundary singularity \eqref{classical_Hardy_inequality}, locally near each singular pole. More precisely, according to \cite[Lemma 2.1]{MR2847469} for each pole $a_i$ there exists $r_i>0$ small enough (depending on the local geometry of the boundary near the pole $a_i$) such that
\be\label{loc.bound.fall}
\into |\n u|^2\dx \geq \frac{N^2}{4}\into \frac{u^2}{|x-a_i|^2}\dx, \quad \forall u\in C_{0}^{\infty}(\overline{\Omega}\cap B_{r}(a_i)), \quad \forall i,
\ee
and the constant $N^2/4$ is optimal in \eqref{loc.bound.fall}.  Then, the proof follows straightforward as in Lemma \ref{lema1.int}. The details are let to the reader.
$\hfill \Box$

Now we return to the proof of Theorem \ref{te2}. For the simplicity of presentation we start the other way around with the proofs of items \eqref{item6}-\eqref{item8}.

\subsubsection*{Proofs of items \eqref{item6}-\eqref{item8}.} They follow the same lines as the proofs of items \eqref{item1}-\eqref{item3} in Theorem \ref{moregen} by applying Lemma \ref{lema1} instead of Lemma \ref{lema1.int}.

\subsubsection*{Proof of item \eqref{item5}.} The strict lower bound in \eqref{bounds.bond} is a trivial consequence of Theorem \ref{prop1} when $n\geq 3$. Now, let us prove \eqref{bounds.bond} when $n=2$. Indeed, let us assume by absurd that the lower bound in \eqref{bounds.bond} is not strict when $n=2$. This implies that $\mu^\star(\Omega; a_1, a_2)=(N-2)^2/4$ where $a_1, a_2$ are two singular poles localized on the boundary.    Since $\mu^\star(\Omega)=(N-2)^2/4< N^2/4$, according to item \eqref{item8}, we deduce that  $\mu^\star(\Omega)=(N-2)^2/4$ is attained by a function $u\in \hoi$. Applying \eqref{optint} we necessary obtain  $u=C\prod_{i=1}^{2}|x-a_i|^{-(N-2)/2}$, for some nontrivial constant $C\in \rr$.  Contradiction, because such $u$ does not belong to $\hoi$.

Finally,  the upper bound in \eqref{bounds.bond} is a consequence of item \eqref{2} in Lemma \ref{lema1}.

Thus, the proof of Theorem \ref{te2} is finished.  $\hfill$ $\Box$

\section{Other remarks, comments and open questions}\label{com}

$\bullet$ We have seen in bounded domains with interior singularities that there is a gap between $\mu^\star(\Omega)$ and  $\mu^\star(\rr^N)=(N-2)^2/n^2$ when $n\geq 3$. It would be very interesting  to determine that gap explicitly.

$\bullet$ We have shown  that Theorem \ref{t1} provides optimal results for particular geometries like balls, exterior of balls or half-spaces. Furthermore, we may stress the question  of determining more general classes of domains for which Theorem \ref{t1}  applies. For instance, we may ask weather Theorem \ref{t1} is valid for convex domains. A positive answer to this latter question does not seem to be trivial at all. Indeed,  the optimality of the Hardy inequality with one singular potential obtained in balls and half-spaces is enough to extend similar results to more general domains like convex domains. This is due to the comparison arguments which make use of the anti-monotonicity properties of the Hardy constant $\mu^\star(\Omega)$ with respect to domains inclusions.

Generally,  the anti-monotonicity property cannot be used in an efficient way for multipolar potentials. Therefore, in order to check wether  $\mu^\star(\Omega)=N^2/n^2$  for convex domains, it suffices to build weights  $f$ satisfying \eqref{impcond4}.  The existence of such weighted functions in any other domains would be also enough to obtain $\mu^\star(\Omega)=N^2/n^2$.

In particular, there are non-convex domains for which $\mu^\star(\Omega)=N^2/n^2$. Indeed, in view of the proof of Theorem \ref{t1} this is true in any domain included in a ball, touching the boundary of the ball at the singular poles.


$\bullet$ Moreover, we may wonder if the family of constants $\{c_\mu\}_{\mu>0}$  in Theorem \ref{moregen} is uniformly bounded as $\mu$ tends to $\mu_\omega^\star(\Omega)=(N-2)^2/(4n-4)$ so that we can obtain the validity of Theorem \ref{moregen} even in the critical case $\mu=\mu_\omega^\star(\Omega)$. Equivalently, this would ask to the attainability of $\mu_\omega^\star(\Omega)$ in \eqref{weakoptimalHardy}.  The same questions stand in the case of boundary singularities where $\mu_\omega^\star(\Omega)=N^2/(4n-4)$ as shown in Theorem \ref{te2}.  Unfortunately, our approaches do not allow to say anything wether these critical values are attained or not.


%
%

%
%

$\bullet$  Finally,  we have obtained the Hardy inequality  with the optimal constant $\mu^\star(\Omega)=N^2/n^2$ in various domains with different geometries like the half-space, the ball, the exterior of a ball, etc.. In proving so, we had to make an adequate election for the weight $f$ in \eqref{impcon}.  Previous examples suggest that $f$ must be chosen in terms of the implicit equation of the boundary $\Gamma$ in the neighborhood of the singular poles $a_i$.

It is important to remark that this is not the case of an ellipse. More precisely, assume $\Omega\subset \rr^2$ is the interior of an ellipse given by
$$\mathcal{E}: \quad \frac{x_1^2}{a^2}+\frac{x_2^2}{b^2}\leq 1,$$
with the singular poles $a_i=(a_i^1, a_i^2)$ lying on its boundary.

Moreover, let us consider the weight function induced by the implicit equation of the ellipse, that is $f=1-\frac{x_1^2}{a^2}+\frac{x_2^2}{b^2}$ and satisfies $f>0$ in $\mathcal{E}$ and $f=0$ on the boundary of $\mathcal{E}$.
For such $f$ we evaluate the expression $S(f, a_i)$ in \eqref{impcond4}:
\begin{align}\label{nonconst}
-\D f |x-a_i|^2 + &2 N \n f \cdot (x-a_i) - 2 N f=\nonumber\\
&=\frac{2(b^2-a^2)\Big( (x_2-a_i^2)^2 -(x_1-a_i^1)^2\Big)}{a^2 b^2}.
\end{align}
 Observe that quantity \eqref{nonconst} does not have a constant sign in $\mathcal{E}$ so the third condition in \eqref{impcond4} is not verified for such $f$.  In consequence, we cannot make any statement about the Hardy constant $\mu^\star(\Omega)$ in this case.

$\bullet$ To conclude, one of the main further challenges is to extend our analysis to multipolar potentials of the form
 \be\label{other}
 V=\sum_{i=1}^{n}\frac{1}{|x-a_i|^2},
 \ee
for which, to our knowledge, the corresponding Hardy constants are not known for any particular domains. This problem has been intensively studied in the literature quoted below, but optimal results are still to be obtained. We believe that the  optimal results of our paper  could be a hint in order to handle other types of multi-singular potentials such as \eqref{other}.

\noindent {\bf Acknowledgements.}

The author wishes to thank the referee for his  valuable remarks and helpful comments which leaded to a better redaction of the paper.

This work was partially supported by the both grants of the Ministry of National Edu\-cation, CNCS-UEFISCDI Romania, project PN-II-ID-PCE-2012-4-0021  and project  PN-II-ID-PCE-2011-3-0075, and the Grant  MTM2011-29306-C02-00 of the MICINN (Spain).


\end{document}